\newtheorem{theorem}{Theorem}
\newtheorem{APthm}{Theorem}
\newtheorem{lemma}{Lemma}
\newtheorem{prop}{Proposition}
\newcommand{\R}{{\mathbb R}}
\newcommand{\Z}{{\mathbb Z}}
\newcommand{\N}{{\mathbb N}}
\newcommand{\ch}{{\bf 1}}
\newcommand{\ol}[1]{\overline{#1}}
\title{Principal Eigenvalue for Brownian Motion on a Bounded Interval  with Degenerate Instantaneous Jumps}  
\author{Iddo Ben-Ari\footnote{This work was partially supported by NSA grant H98230-12-1-0225, and a grant from the Simons Foundation (\#208728 to Iddo Ben-Ari)} \\~  Department of Mathematics\\ University of Connecticut\\ 196 Auditorium Rd \\ Storrs, CT 06269-3009\\~\\
\href{mailto:iddo.ben-ari@uconn.edu}{iddo.ben-ari@uconn.edu}}
\date{\today}
\begin{document}
\maketitle
\begin{abstract}
We consider a  model of Brownian motion on a bounded open interval with instantaneous jumps. The jumps occur at a spatially dependent rate given by a positive parameter times a continuous function positive on the interval and vanishing  on its boundary. At each jump event the process is redistributed uniformly in the interval.  We obtain sharp asymptotic bounds on the  principal eigenvalue for the generator of the process as the parameter tends to infinity. Our work answers a question posed by Arcusin and Pinsky. 
\end{abstract}
 
\section{Introduction and Statement of Results}

In a sequence of recent papers Pinsky \cite{P1} \cite{P2} and  Arcusin and  Pinsky  \cite{PA}  considered  the following model of a Brownian motion (elliptic diffusion in \cite{P2})  with instantaneous jumps.  Let $D\subset \R^d$ be a bounded domain, and let $\mu$ be a Borel probability measure on $D$ and $V \in C(\ol{D})$ a nonnegative function. Let $C_{\mu,V}u  :=V (x) \left ( \int u d\mu-u  \right),~u\in C_b (D)$, denote the generator of the pure-jump process  on $D$ with jump intensity $V$ and  a  jump (or more precisely, redistribution)  measure  $\mu$.  For $\gamma>0$, the diffusion with jumps process is generated by the non-local operator $-L_{\gamma,\mu,V}$, where
 \begin{equation} 
\label{eq:Lgamma} 
L_{\gamma,\mu,V} := - \frac 12 \Delta  -  \gamma C_{\mu,V}, 
 \end{equation}
  with the Dirichlet boundary condition on $\partial D$. In words, the process considered is Brownian motion killed when exiting $D$, and while in $D$, is redistributed at a spatially dependent rate $\gamma V$ according to measure $\mu$. The main object of study in the papers above was the asymptotic behavior of $\lambda_0(\gamma)$, the  principal eigenvalue for  $L_{\gamma,\mu,V}$,  as $\gamma\to\infty$.  The first paper, \cite{P1}, studies the model when  $V\equiv 1$. The second paper \cite{PA}  provides the nontrivial extension to the case where  $V$ is strictly positive on $\ol{D}$.  In what follows, we will refer to this positivity assumption as the ``nondegeneracy" condition. When $V$ is constant,  redistribution occurs at the jump times of Poisson of rate $\gamma V$, while for spatially dependent $V$ the jumps occur according to events of a a time-changed Poisson processes with constant rate $1$, time being sped up when $\gamma V$ is lager than $1$ and slowed down when $\gamma V <1$. The most recent paper \cite{P2} studies the model under the   nondegeneracy condition in the general setting of elliptic diffusions.  \\
  
  Let  $X:=(X(t):t\ge 0)$ denote  the process generated by $-L_{\gamma,\mu,V}$, and let $P_x^\gamma,E_x^\gamma$ denote the corresponding probability and expectation conditioned on  $X(0)=x\in D$. When  $\gamma=0$, we abbreviate and write $P_x$ and $E_x$. That is, $P_x$ and $E_x$ correspond to Brownian motion (no jumps).  Let 
   $$ \tau:=\inf \{ t>0: X(t) \not \in D\}$$
   denote the exit time of $X$ from $D$. Then $\lambda_0(\gamma)$ has the following probabilistic interpretation  \cite{PA}. For any $x\in D$, 
  \begin{equation} 
  \label{eq:spectral} 
   \lambda_0(\gamma) = - \lim_{t\to\infty} \frac {1}{t} \ln  P_x^\gamma (\tau>t).   \end{equation} 
   % where $P_x^\gamma (\tau>t)$ could be also replaced by $\sup_{x\in D}   P_x^\gamma (\tau>t)$. 
Observe that  \eqref{eq:spectral} implies that given any $x\in D$, we have 
     \begin{equation} 
     \label{eq:momgen_rep}
      \lambda_0(\gamma) = \sup\{ \lambda\in \R : E_x^\gamma  \left (e^{\lambda \tau}\right) < \infty\}.
      \end{equation} 
In fact,  the limits and equalities in \eqref{eq:spectral} and \eqref{eq:momgen_rep} remain to hold when replacing the probability $P_x^\gamma$ and expectation $E_x^\gamma$ with $\sup_{x \in D}  P_x^\gamma$ and $\sup_{x\in D} E_x^\gamma$, respectively. \\

The above cited papers provide sharp asymptotic behavior for $\lambda_0(\gamma)$ as $\gamma\to \infty$, under the nondegeneracy condition and smoothness assumptions on $\partial D$ and $\mu$. In particular, the following result was obtained. 
 
    \begin{APthm}[\cite{PA}, Theorem 1-i]
    \label{th:nondeg} 
     Assume that 
    $D$ has  $C^{2,\beta}$-boundary  for some $\beta \in (0,1)$,  $\min_{x \in \ol{D}} V(x)>0$, and for some $\epsilon>0$, $\mu$ possesses a density in $C^1(\ol{D}^\epsilon)$, where $D^\epsilon:=\{x\in D: d(x,\partial D)<\epsilon\}$, then 
  \begin{equation}
  \label{eq:limlim}  \lim_{\gamma \to\infty} \frac{ \lambda_0(\gamma)}{\sqrt{\gamma}}= \frac{ \int_{\partial D} \frac{\mu}{\sqrt{V}} d\sigma}{\sqrt{2} \int_D \frac{1}{V} d\mu},
  \end{equation}
   where $\sigma$ is the Lebesgue measure on $\partial D$. 
   \end{APthm}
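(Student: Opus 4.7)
The plan is to exploit the renewal structure induced by the fact that every jump restarts the diffusion independently according to $\mu$. Let $T_1$ denote the first jump time. Decomposing $\{\tau > t\}$ according to the number of jumps before $\tau$ and applying the strong Markov property at the jump times, $\lambda_0(\gamma)$ is the unique $\lambda > 0$ satisfying
\[
E_\mu^\gamma\left[e^{\lambda T_1}\ch_{\{T_1 < \tau\}}\right] = 1.
\]
Writing $q(\gamma) := P_\mu^\gamma(T_1 < \tau)$ and $\bar L(\gamma) := E_\mu^\gamma[T_1 \wedge \tau]$, the fact that $q(\gamma) \to 1$ and $\bar L(\gamma) \to 0$ as $\gamma \to \infty$ justifies the linearization of the above identity (checked a posteriori from the bound $\lambda_0(\gamma)\bar L(\gamma) = O(\gamma^{-1/2})$ that falls out of the final estimate), yielding
\[
\lambda_0(\gamma) = \frac{P_\mu^\gamma(\tau < T_1)}{E_\mu^\gamma[T_1 \wedge \tau]}\,(1+o(1)), \qquad \gamma \to \infty.
\]

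I would then estimate the two factors separately. For the denominator, if $x$ is bounded away from $\partial D$ the diffusion moves only $O(\gamma^{-1/2})$ over a time of order $\gamma^{-1}$, so $T_1\wedge\tau$ is asymptotically exponential with rate $\gamma V(x)$; dominated convergence together with a quick bound showing that an $o(1)$-width boundary layer contributes negligibly gives
\[
E_\mu^\gamma[T_1 \wedge \tau] = \gamma^{-1}\int_D V^{-1}\,d\mu + o(\gamma^{-1}).
\]
For the numerator, the Feynman-Kac identity
\[
P_x^\gamma(\tau < T_1) = E_x\left[\exp\left(-\gamma\int_0^\tau V(X(s))\,ds\right)\right]
\]
is exponentially small unless $d(x,\partial D) = O(\gamma^{-1/2})$, so the dominant contribution comes from a boundary layer of that width. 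Inside this layer I would freeze $V\approx V(y)$ at the nearest boundary point $y=y(x)$ and approximate the geometry by a half-space, reducing matters to the elementary identity that a $1$-dimensional Brownian motion starting at distance $d$ from $\{0\}$ and killed at an independent $\mathrm{Exp}(\gamma V(y))$ clock exits through $0$ with probability $\exp(-d\sqrt{2\gamma V(y)})$. Passing to tubular coordinates $(y,d)$ in the boundary layer and integrating against the smooth density $f$ of $\mu$ then produces
\[
P_\mu^\gamma(\tau < T_1) \sim \frac{1}{\sqrt{2\gamma}}\int_{\partial D}\frac{f(y)}{\sqrt{V(y)}}\,d\sigma(y),
\]
and dividing by the denominator yields the stated limit.

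The main obstacle is making the boundary-layer analysis rigorous: the errors introduced by the half-space approximation and by replacing $V$ with its boundary value must remain $o(1)$ after multiplication by $\sqrt{\gamma}$, which forces careful use of the $C^{2,\beta}$ regularity of $\partial D$ to control the tubular coordinate change and uniform H\"older bounds on $V$. A natural route is to sandwich $P_x^\gamma(\tau < T_1)$ between sub- and super-solutions of the associated elliptic boundary value problem on slightly enlarged and slightly shrunk reference domains whose boundaries are level sets of $d(\cdot,\partial D)$; on these reference domains the computation reduces, via the maximum principle, to a one-dimensional ODE whose solution is $\exp(-d\sqrt{2\gamma V(y)})$ up to lower-order corrections, and the resulting two-sided bounds can be matched as $\gamma\to\infty$.
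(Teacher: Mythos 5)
The paper does not prove this statement: Theorem \ref{th:nondeg} is quoted verbatim from Arcusin and Pinsky \cite{PA} as background, so there is no in-house proof to compare against. Judged on its own terms, your outline is sound and, notably, it is precisely the probabilistic machinery that the present paper builds for the \emph{degenerate} case: your renewal equation $E_\mu^\gamma[e^{\lambda T_1}\ch_{\{T_1<\tau\}}]=1$ is the blow-up condition for the denominator in Proposition \ref{pr:Rmomgen}-(4) (equivalently, $\lambda E_\mu(\int_0^\tau e^{R_\lambda(t)}dt)=E_\mu(e^{R_\lambda(\tau)})$), your numerator is $E_\mu(e^{-\gamma\int_0^\tau V})$ via the same Feynman--Kac identity as Proposition \ref{pr:Rmomgen}-(2), and your boundary-layer estimates are the $\alpha=0$ analogues of Lemmas \ref{lem:upperbd} and \ref{lem:lowerbd}. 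The arithmetic checks out: the one-dimensional exit probability $e^{-d\sqrt{2\gamma V(y)}}$ integrates in $d$ to $(2\gamma V(y))^{-1/2}$, the denominator is $\gamma^{-1}\int_D V^{-1}d\mu(1+o(1))$ by dominated convergence (uniformly, since $\gamma E_x[T_1\wedge\tau]\le 1/\min V$), and the quadratic term in the linearization is $O(\lambda^2\gamma^{-2})=O(\gamma^{-1})$ against a linear term of order $\gamma^{-1/2}$, so the relative error is $o(1)$ as you claim.

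Two points deserve more care than you give them. First, the assertion that $\lambda_0(\gamma)$ is \emph{the} root of the renewal equation needs justification: you must check that $\lambda\mapsto E_\mu^\gamma[e^{\lambda T_1}\ch_{\{T_1<\tau\}}]$ is finite, continuous and increasing on a range of $\lambda$ extending well past $O(\sqrt{\gamma})$ (here nondegeneracy gives finiteness up to roughly $\gamma\min_{\ol D}V$), so that the root is reached before either expectation in the decomposition of Proposition \ref{pr:Rmomgen}-(1) blows up; combined with \eqref{eq:momgen_rep} this identifies the root with $\lambda_0(\gamma)$. Second, $V$ is only assumed continuous on $\ol D$, so the ``uniform H\"older bounds on $V$'' you invoke are not available, and the frozen-coefficient comparison function $e^{-d(x)\sqrt{2\gamma V(y(x))}}$ need not be twice differentiable. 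The standard repair is to cover a tubular neighborhood by small boundary patches on which $V$ is sandwiched between constants $V(y)\pm\epsilon$ (uniform continuity), use the exact constant-coefficient solutions as sub- and supersolutions there, and let $\epsilon\to0$ after $\gamma\to\infty$; the $C^{2,\beta}$ regularity of $\partial D$ controls the $\Delta d$ correction, which is $O(\gamma^{-1/2})$ relative to the main term. With these repairs the argument goes through, and it gives a probabilistic alternative to the analytic route of \cite{PA}.
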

   We comment that \cite[Theorem 1]{PA} includes an additional statement generalizing the result to  $\mu$ with  smooth density near $\partial D$, vanishing up to the $\ell$-th order for some $\ell\in \Z_+$.\\
      
   The nondegeneracy condition  could be viewed as one  extreme, the other extreme being the case where $V$ is compactly supported. It was noted in \cite{PA} that when the support $K$ of  $V$ is compact,  then for $x\in D\backslash K$, and for any $\gamma>0$,  the distribution of $\tau$ under $P_x^{\gamma}$  dominates the exit time for the Brownian Motion (no jumps) from $D\backslash K$, and hence it follows from \eqref{eq:spectral} that  $\lambda_0$ is bounded above by the principal eigenvalue for $-\frac 12 \Delta$ on $D\backslash K$, a positive constant independent of $\gamma$. \\
   
    In light of the above,  is it reasonable to expect some transition in the behavior of $\lambda_0$ from the nondegenerate case to the compactly supported case  to occur when $V$ is positive on $D$ and vanishes on $\partial D$.  The behavior in this regime was left as an open problem in \cite{PA}. In this paper we answer it in one dimension. Our method is based on analysis of the moment generating function in  \eqref{eq:momgen_rep},  obtained through  probabilistic arguments. \\
    
         In what follows, for real-valued functions $f,g$ with domain $D$, and   $a \in \partial D$ or $a$ taken as $\partial D$, we write   $f(x)\underset{x\to a} {\asymp} g(x)$   meaning  $0< \liminf_{x\to a} f(x) / g(x) \le \limsup_{x\to a} f(x) / g(x)<\infty$, whenever the limits make sense.  This notation will be also used when  $f,g$ are real-valued functions on $(0,\infty)$,  and $a$ taken as $0$ or $\infty$. \\
         
Before stating our main result, we present some heuristics derived from Theorem \ref{th:nondeg}, which provide some indication on the behavior  when $V$ vanishes on $\partial D$. Assume that  $\mu$ is uniform on $D$ and that $V(x) \underset{x\to \partial D}{\asymp} d(x,\partial D)^{\alpha}$ for some $\alpha>0$. Observe that \eqref{eq:limlim} is not well-defined also because  the surface integral in the numerator of the right-hand side blows up. We can approximate it  through volume integrals of the form 
   $$  \frac{\int_{D^\epsilon} \frac{ d\mu }{\sqrt{V}}}{\int_{D^\epsilon} d \mu}\underset{\epsilon\to 0} {\asymp} \epsilon^{-\alpha/2},~\alpha \ne 2,$$
   where $D^\epsilon$ is as in Theorem \ref{th:nondeg} (note that the ratio approximates the integral with respect to the normalized Lebesgue measure, therefore a  positive multiplicative constant is missing. Since this constant  has no effect on the argument, we will ignore it).   When $\alpha<1$, the volume integral in the denominator of \eqref{eq:limlim} converges, therefore  letting $\epsilon \to 0$ in the approximation above,  the ratio blows up,  giving  the prediction $\sqrt{\gamma} = o(\lambda_0(\gamma))$. When $\alpha\ge 1$, the denominator also blows up, suggesting a possible  phase transition at  $\alpha=1$.  For $\alpha>1$, we can approximate the volume integral in the denominator  by integrating over $D-D^\epsilon$ instead of $D$. Then, 
    $$ \int_{D-D^\epsilon} \frac{d\mu }{V} \underset{\epsilon\to 0} {\asymp} \epsilon^{1-\alpha}.$$  
Combining both approximations  (with same $\epsilon$, this is not a rigorous treatment),  we obtain an approximation to the ratio, proportional to  $\epsilon^{-\frac{\alpha}{2}}/ \epsilon ^{1-\alpha} = \epsilon^{\frac{\alpha}{2}-1}$, as $\epsilon \to 0$.  This blows up as $\epsilon \to 0$ when $\alpha \in (1,2)$, converges to $1$ when $\alpha=2$ and converges to $0$ when $\alpha>2$. Summarizing, the heuristics suggest that $\sqrt{\gamma} = o(\lambda_0(\gamma))$ for $\alpha \in (1,2)$,  while $\lambda_0(\gamma) \asymp \sqrt{\gamma}$ for $\alpha=0,2$, and  $\lambda_0(\gamma) = o(\sqrt{\gamma})$ for $\alpha>2$. \\

Here is our main result. 

 \begin{theorem}
 \label{th:asymp} 
Let $D=(0,1)$ and $\mu$ denote the Lebesgue measure on $D$. Assume that $V\in C(\ol{D})$ satisfies     $V>0$ on $D$, and for some $0\le \alpha'\le \alpha <\infty$, $V(x)\underset{x\to 0^+}{ \asymp} x^{\alpha}$, and $V(x) \underset{x\to 1^-}{\asymp} (1-x)^{\alpha'}$. Let $\delta =\delta (\alpha) = \frac{\alpha\wedge 1+1}{\alpha+2}$. Then 
  \begin{equation}
  \label{eq:asymp}  \lambda_0(\gamma) \underset{\gamma\to \infty} {\asymp} \gamma^{\delta(\alpha)} \times  \begin{cases} \frac{1}{\ln \gamma} & \alpha=1; \\ 1 & \mbox{otherwise}.\end{cases} 
 % \gamma^{\frac{\alpha+1}{\alpha+2}} & \alpha<1; \\ \frac{\gamma^{\frac{2}{3}}}{\ln \gamma} & \alpha=1; \\ \gamma^{\frac{2}{\alpha+2}}& \alpha>1.\end{cases}
  \end{equation} 
 \end{theorem}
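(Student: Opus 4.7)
The plan is to reduce the asymptotics of $\lambda_0(\gamma)$ to the threshold condition for a single scalar function of $\lambda$, then to locate that threshold using the boundary-layer scale identified in the author's heuristic. Because each post-jump position is drawn independently from $\mu$, the successive inter-jump times under $P_\mu^\gamma$ are i.i.d., and the strong Markov property decomposes $\tau$ into a geometric number of i.i.d.\ inter-jump times followed by a final Brownian exit. Denoting by $J_1$ the first jump time, by $\tau_D$ the Brownian exit time, and by $T_1 := J_1\wedge \tau_D$ the first event time, set
\begin{equation*}
a_\gamma(\lambda) := E_\mu^\gamma\!\left[e^{\lambda T_1}\ch_{T_1 = \tau_D}\right],\qquad b_\gamma(\lambda) := E_\mu^\gamma\!\left[e^{\lambda T_1}\ch_{T_1 = J_1}\right].
\end{equation*}
The geometric decomposition yields $E_\mu^\gamma[e^{\lambda \tau}] = a_\gamma(\lambda)/(1 - b_\gamma(\lambda))$, and \eqref{eq:momgen_rep} then gives $\lambda_0(\gamma) = \sup\{\lambda : b_\gamma(\lambda) < 1\}$, so that the proof reduces to locating the root of $b_\gamma(\lambda) = 1$.

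Conditioning on the Brownian path and using that the jump clock is Poisson of intensity $\gamma V$, one has $b_\gamma(\lambda) = \int_0^1 u_\gamma^\lambda(x)\,dx$, where
\begin{equation*}
u_\gamma^\lambda(x) := E_x\!\Bigl[\int_0^{\tau_D}\!\! \gamma V(B_t)\, e^{\lambda t - \int_0^t \gamma V(B_s)\,ds}\, dt\Bigr]
\end{equation*}
($E_x$ denotes standard Brownian motion $B$ starting at $x$, and $\tau_D$ its exit time from $(0,1)$); equivalently, $u_\gamma^\lambda$ solves the Sturm-Liouville problem $\tfrac12 u'' + (\lambda - \gamma V)u = -\gamma V$ on $(0,1)$ with zero Dirichlet data. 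The relevant scale is the boundary-layer width $\ell := \gamma^{-1/(\alpha+2)}$ near $0$ (and the smaller $\ell' := \gamma^{-1/(\alpha'+2)} \le \ell$ near $1$), fixed by balancing the Brownian escape time $\sim x^2$ from position $x$ with the jump time $\sim 1/(\gamma V(x))$. I would split the $x$-integration into the two boundary layers and the bulk on which $V \ge V_* > 0$; the rescaling $y = x/\ell$ collapses the boundary-layer ODE onto an $O(1)$ reference problem, while in the bulk the Feynman–Kac weight decays on time scale $1/\gamma$, enabling a Taylor expansion in $\lambda/\gamma$.

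The key matching step is to write $1 - b_\gamma(\lambda) = q_\gamma - (b_\gamma(\lambda) - b_\gamma(0))$, where $q_\gamma := 1 - b_\gamma(0) = P_\mu^\gamma(T_1 = \tau_D)$ is the per-cycle exit probability. Classical Brownian hitting-and-killing estimates give $q_\gamma \asymp \ell$, while expanding the exponential to first order in $\lambda$ yields $b_\gamma(\lambda) - b_\gamma(0) \asymp \lambda\, m_\gamma$, with $m_\gamma := E_\mu^\gamma[T_1\ch_{T_1 = J_1}]$ the mean length of a jump-ending cycle. A direct split-integral estimate gives
\begin{equation*}
m_\gamma \;\asymp\; \int_0^\ell x^2\,dx + \gamma^{-1}\!\int_\ell^{1/2} x^{-\alpha}\,dx,
\end{equation*}
which evaluates to $\gamma^{-1}$ for $\alpha < 1$, $\gamma^{-1}\log\gamma$ for $\alpha = 1$, and $\ell^3 = \gamma^{-3/(\alpha+2)}$ for $\alpha > 1$. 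Solving $q_\gamma \asymp \lambda\, m_\gamma$ then reproduces exactly \eqref{eq:asymp}. To make this rigorous I would produce matching sub- and super-solutions of the ODE for $u_\gamma^\lambda$, which after the rescaling $y = x/\ell$ reduces to comparison with explicitly solvable reference problems. The main obstacle will be the borderline case $\alpha = 1$, in which $\ell^3 \asymp \gamma^{-1}$ and the two contributions to $m_\gamma$ are comparable: the $\log \gamma$ correction then arises from the marginal divergence of $\int_\ell^{1/2} x^{-1}\,dx \asymp \log \gamma$, and its sharp capture requires quantitative control of $u_\gamma^\lambda$ across the intermediate region $x \asymp \ell$ where neither the inner-layer rescaling nor the bulk expansion is accurate on its own.
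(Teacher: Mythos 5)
Your reduction is, at the structural level, the same as the paper's. Your geometric decomposition $E_\mu^\gamma[e^{\lambda\tau}]=a_\gamma/(1-b_\gamma)$ is identity (1) of Proposition \ref{pr:Rmomgen} integrated against $\mu$, and the criterion $b_\gamma(\lambda)=1$ is exactly the vanishing of the denominator in identity (4); your $q_\gamma\asymp\ell$ and your split integral for $m_\gamma$ reproduce the paper's bounds $E_\mu(e^{R_\lambda(\tau)})\asymp r$ and Lemma \ref{lem:hasit}-(2), with the same boundary-layer scale $r=\gamma^{-1/(\alpha+2)}$. Where you genuinely diverge is in execution: you propose sub/super-solutions of the Sturm--Liouville problem after rescaling, whereas the paper works probabilistically, iterating the strong Markov property across nested layers $r_1<r_2<r_3$ and invoking the explicit hitting-time generating functions of Proposition \ref{pr:momgen}. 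Your route to the divergence of $E_\mu^\gamma[e^{\lambda\tau}]$ via the elementary inequality $e^{\lambda T_1}-1\ge\lambda T_1$, combined with a lower bound on $m_\gamma$ and an upper bound on $q_\gamma$, is a clean alternative to the paper's separate crude bound (Proposition \ref{pr:crudebd}); that direction of your linearization is free by convexity.

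The step you must repair is the other direction of the linearization: the claim $b_\gamma(\lambda)-b_\gamma(0)\asymp\lambda m_\gamma$, stated as if it held throughout the relevant range of $\lambda$, is false for $\alpha>1$. At the critical scale $\lambda=\theta\gamma^{2/(\alpha+2)}$ one has $\lambda\ell^2=\theta$, so on boundary-layer cycles the tilt $e^{\lambda T_1}$ is not $1+o(1)$; worse, once $\theta$ exceeds a finite threshold (when $\sqrt{2\lambda}$ times the layer width reaches $\pi$, cf.\ Proposition \ref{pr:momgen}-(2)), the exponential moment $E_x[e^{\lambda(\tau\wedge\sigma)}]$ is infinite for $x\asymp\ell$ and $b_\gamma(\lambda)=\infty$, so no first-order expansion can be uniformly valid. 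This is harmless for the conclusion --- you only need the upper bound $b_\gamma(\lambda)-b_\gamma(0)\le C\lambda m_\gamma$ at one sufficiently small $\theta$, which is precisely the restriction $\theta<\theta_1$ in the paper's Lemma \ref{lem:upperbd} --- but you should state it that way. Be aware also that this one-sided bound, i.e.\ showing that the exponentially tilted occupation functional $E_x[\int_0^\tau e^{R_\lambda(t)}dt]$ is comparable to its untilted version uniformly in $x$ across the intermediate region $x\asymp\ell$, is not a routine Taylor estimate: it is the bulk of the technical work in the paper (inequalities \eqref{eq:max_smll} through \eqref{eq:M2close}), and your proposal defers exactly this to unspecified comparison functions. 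Finally, minor points: $\sup\{\lambda:b_\gamma(\lambda)<1\}$ equals $\lambda_0$ only after checking $a_\gamma(\lambda)<\infty$ on the relevant range and transferring from $E_\mu^\gamma$ to $E_x^\gamma$ via identity (1), and the asymmetric endpoint $\alpha'\le\alpha$ should be handled explicitly (the paper does so by a monotone replacement of $V$ with a symmetric minorant).
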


%\begin{remark}
%The power appearing on the left hand side of \eqref{eq:asymp}  is  $\frac{ \alpha \wedge 1+1}{\alpha+2}$.  On the interval $[0,1]$  the exponent increases from $\frac 12$ (nondegenerate case)  to $\frac 23$, while on $(1,\infty)$ the exponent decreases to $0$.   The ..... $\alpha \in (1,2]$ coincides with the exponent at $\frac{2-\alpha}{\alpha}\in [0,1)$. In particular, the exponent for the nondegenerate case coincides with the exponent for $\alpha=2$. 
% \end{remark}
 
 %\begin{remark}
%We also observe that the asymptotic behavior is determined by the larger power, $\alpha$, regardless of the resulting rate on the right-hand side of \eqref{eq:asymp} which, by the discussion above, could smaller or larger for $\alpha'$. This is a result of  the fact that in  the formula for the moment generating function for $\tau$ expressed in terms of the Brownian motion, the term  $-V$ appears as a  penalizing potential, discounting paths which spend more time at sets where $V$ is larger. In particular, the behavior is governed by paths which : Stay  near the boundary where $V$ is small, and there, near $0$ where $V(x)\asymp d(x,\partial D)^{\alpha} = O( d(x,\partial D)^{\alpha'})$.
%\end{remark}

We would like to note the following.

\begin{enumerate} 
\item Observe that $\delta(\alpha')$ may be larger or smaller than $\delta(\alpha)$, yet the asymptotic behavior is determined by the larger parameter $\alpha$. This is a result of  the fact that in the formula for the moment generating function for $\tau$, expressed in terms of the Brownian motion,  the function  $V$ appears as a penalizing potential, discounting paths which spend more time at sets where $V$ is larger.
\item  The nondegeneracy condition is covered by the case $\alpha=0$. 
\item The graph of $\delta$ is shown in Figure \ref{fig:delta}. Note the phase transition at $\alpha=1$. The Theorem corroborates the heuristic derivation preceding it. 
\end{enumerate} 

\begin{figure}
    \includegraphics[scale=0.3]{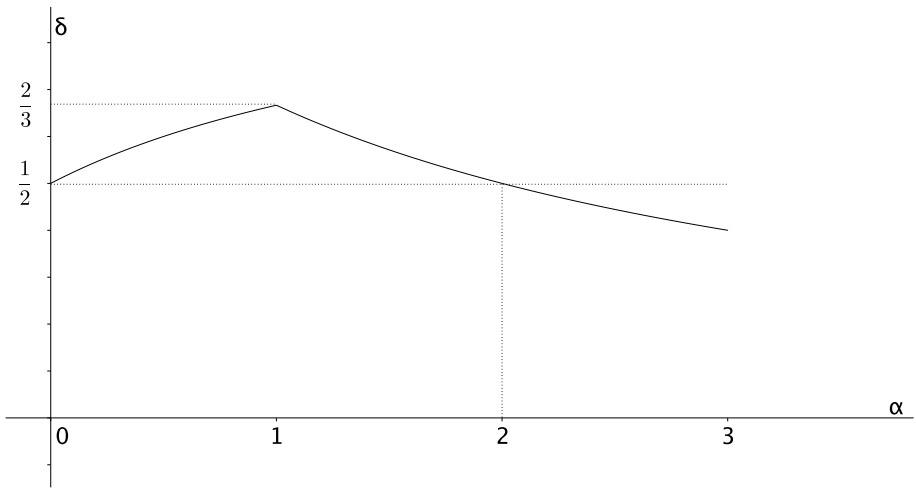}
    \caption{Graph of $\delta$}
    \label{fig:delta}
\end{figure}

The remainder of the paper is organized as follows. In Section \ref{sec:momgen} we prove some identities and a lower bound  on the moment generating function of $\tau$. In Section \ref{sec:BM_comp} we obtain the main estimates on functions of Brownian motion, which when  combined with the results of Section  \ref{sec:momgen} yield the proof of Theorem \ref{th:asymp}.  This proof is given in Section \ref{sec:proof}. 
  
 \section*{Acknowledgement} 
 The author would like to thank Ross Pinsky for helpful discussions and useful suggestions, and to an anonymous referee for carefully reading the manuscript and helping improve the presentation. 
 
%\begin{proof}
% Suppose that $\limsup_{\gamma \to\infty} \frac{\rho(\gamma)}{\lambda(\gamma)} = \infty$. Let $\Gamma:=(\gamma_n:n\in\N)$ be an increasing sequence satisfying $\rho(\gamma_n)/\lambda(\gamma_n) \to \infty$. All limits below will be along the sequence $\Gamma$. 
% First observe that by Jensen's inequality, 
% $$ E_\mu^\gamma \left (  e^{\rho \tau}\right) \ge \frac{1}{E_\mu \left ( e^{-\rho \tau} \right)}.$$
%Let $\nu_\gamma$ denote the law of $e^{-\lambda \tau}$ under $P_\mu^\gamma$. 
%$$ E_\mu \left ( e^{-\rho \tau} \right) = E_\mu \left ( \left ( e^{- \lambda \tau} \right) ^{\rho/\lambda}\right)=\int x^{\rho/\lambda} d \mu_\gamma(x).$$
%Clearly $\mu_\gamma$ are tight. Furthermore, it follows from Corollary \ref{cor:....} that for $\gamma$ large enough, 
% $$E_\mu^\gamma  e^{-\lambda \tau} \le \frac{1}{1+c_1}.$$
%By Chebychev, 
%$$\mu_\gamma\left (  [ e^{-\epsilon},1]\right ) =P_\mu^{\gamma} (e^{-\lambda \tau} \ge e^{-\epsilon} )\le \frac{e^{\epsilon}}{1+c}\le \frac{1+c_1/2},$$
%provided $\epsilon$ is sufficiently close to $0$. Therefore $\liminf_{\gamma \to\infty} \mu_\gamma ([0,e^{-\epsilon}]) =c_2 >0.$
 %We have 
\section{The Moment Generating Function}
We define a family of stopping times for $X$. For $y \in D$, we let 
 \begin{equation} 
 \label{eq:tau_y} 
 \tau_{y}:= \inf \{t \ge 0: X(t) =y\}.
 \end{equation} 
\label{sec:momgen}
We begin by recalling a well-known classical result about the moment generating function of the exit time of Brownian motion from an interval (see e.g. \cite[pp. 71-73]{RY}).
\begin{prop}
\label{pr:momgen} 
 Let $0\le  a<y<b\le 1$ and let $\rho>0$. For $i=a,b$, let $A_i:=\{ \tau_a \wedge \tau_b = \tau_i\}$, and let $j:=a$ if $i=b$ and $j:=b$ otherwise.  Then we have : 
 \begin{enumerate} 
 \item  
    \begin{equation*} 
    E_y \left ( e^{-\rho \tau_i  }\ch_{A_i } \right) = \frac{ \sinh (\sqrt{2\rho} |y-j| ) }{\sinh(\sqrt{2\rho}(b-a) )},\mbox{ and } E_y \left ( e^{-\rho\left (  \tau_a \wedge \tau_b\right)} \right) = \frac { \cosh (\sqrt{2\rho} (y- \frac{a+b}{2}))}{\cosh (\sqrt{2\rho}\frac {b-a}{2})}.
   %,\mbox{ and }E_y \left ( e^{-\rho \tau_b }\ch_{\{\tau_b< \tau_a\}} \right) = \frac{ \sinh (\sqrt{2\rho} (y-a) ) }{\sqrt{2\rho}(b-a) }.
    \end{equation*} 
\item If $\sqrt{2\rho }(b-a) < \pi $, then 
    \begin{equation*} 
   % \label{eq:thisthat} 
   E_y  \left( e^{\rho \tau_i} \ch_{A_i}\right) =  \frac{ \sin( \sqrt{2\rho} |y-j| )}{\sin(\sqrt{2\rho } (b-a))}%\mbox{, and } E_y \left(  e^{\rho  \tau_b} \ch_{\{\tau_b < \tau_a\}}\right)  = 
 %    \frac{ \sin( \sqrt{2\rho} y)}{\sin(\sqrt{2\rho} (b-a))}.
     \end{equation*}
     If $\sqrt{2\rho} (b-a) \ge \pi$, then the expectation above is infinite. 
     \end{enumerate} 
\end{prop}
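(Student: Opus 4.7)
My plan is the standard martingale / optional stopping route, running two parallel constructions for the two parts and leveraging \eqref{eq:momgen_rep} to handle the subtle integrability in Part 2.

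For Part 1, I would introduce, for each $j\in\{a,b\}$, the function $h_j(x) := \sinh(\sqrt{2\rho}(x-j))$, and also $g(x) := \cosh\bigl(\sqrt{2\rho}(x-\tfrac{a+b}{2})\bigr)$. Each solves $\tfrac12 f'' = \rho f$, so by It\^o $f(B_t)e^{-\rho t}$ is a local martingale, and stopped at $\tau := \tau_a\wedge\tau_b$ it is bounded (continuous $f$ on $[a,b]$, $e^{-\rho t}\le 1$); optional stopping then gives $f(y) = E_y[f(B_\tau)e^{-\rho\tau}]$. For $h_j$ this isolates the event $A_i$ (since $h_j$ vanishes on its complement $A_j$), producing the first formula after solving for the expectation; for $g$ the two boundary values coincide, and the second formula drops out after dividing.

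For Part 2, I would repeat the construction with the trigonometric counterparts $\tilde h_j(x) := \sin(\sqrt{2\rho}(x-j))$, which solve $\tfrac12 f'' = -\rho f$, so $\tilde h_j(B_t)e^{\rho t}$ is a local martingale. Under $\sqrt{2\rho}(b-a)<\pi$ the function $\tilde h_j$ is bounded and nonnegative on $[a,b]$, but the exponential factor is unbounded, so optional stopping applied to $\tau\wedge n$ requires an a priori integrability bound in order to let $n\to\infty$. I would supply this from \eqref{eq:momgen_rep} applied at $\gamma=0$: the principal Dirichlet eigenvalue of $-\tfrac12\partial_x^2$ on $(a,b)$ equals $\pi^2/(2(b-a)^2)$, so $E_y(e^{\rho\tau})<\infty$ precisely when $\rho<\pi^2/(2(b-a)^2)$. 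Dominated convergence then delivers the formula.

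For the converse blow-up claim, the same spectral bound yields $E_y(e^{\rho\tau})=\infty$ whenever $\sqrt{2\rho}(b-a)\ge\pi$; to conclude that each individual $E_y(e^{\rho\tau_i}\ch_{A_i})$ is infinite, I would let $\rho$ increase to the critical value in the already-proved formula, using that for $y\in(a,b)$ the numerator $\sin(\sqrt{2\rho}|y-j|)$ stays bounded below by a positive constant while the denominator $\sin(\sqrt{2\rho}(b-a))$ tends to zero, and concluding by monotone convergence. The principal obstacle is the Part 2 integrability: the martingale argument alone gives no control over $E_y(e^{\rho\tau})$, so the spectral interpretation \eqref{eq:momgen_rep} must be invoked both to justify passing to the limit on the good side and to pin down the sharp threshold on the bad side.
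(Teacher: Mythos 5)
The paper does not prove this proposition; it is stated as a classical fact with a citation to Revuz--Yor, and the martingale/optional-stopping computation you outline is exactly the standard argument behind that reference, so your route is essentially the intended one and it is correct. Two small remarks. First, for the Part 2 integrability you invoke \eqref{eq:momgen_rep} at $\gamma=0$ on the subinterval $(a,b)$; that works, but note that \eqref{eq:momgen_rep} by itself only gives finiteness for $\rho$ strictly below $\pi^2/(2(b-a)^2)$ and is agnostic at the critical value, so your phrase ``precisely when'' is doing no work on the bad side --- fortunately your separate monotone-convergence argument (letting $\rho\nearrow$ the critical value in the already-proved formula, with the numerator bounded below and the denominator tending to $0$) is what actually establishes divergence at and above the threshold, and that argument is sound. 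Second, if you want the proof to be self-contained rather than resting on the spectral characterization, the domination can be obtained internally: apply optional stopping at $\tau\wedge n$ to the local martingale $\cos\bigl(\sqrt{2\rho}\,(B_t-\tfrac{a+b}{2})\bigr)e^{\rho t}$; when $\sqrt{2\rho}(b-a)<\pi$ this cosine is bounded below on $[a,b]$ by $m:=\cos(\sqrt{2\rho}\,\tfrac{b-a}{2})>0$, whence $E_y\bigl(e^{\rho(\tau\wedge n)}\bigr)\le m^{-1}$ uniformly in $n$ and monotone convergence gives $E_y\bigl(e^{\rho\tau}\bigr)<\infty$ directly. Either way the proposition is proved.
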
 
  \begin{prop}
  \label{pr:crudebd} 
  There exists a  constant $\theta_0\in (0,\infty)$ depending only on $V$   such that if $\lambda \ge \theta_0  \gamma^{\frac{2}{\alpha+2}}$, then 
  $$ E_\mu^\gamma ( e^{\lambda  \tau}) = \infty.$$ 
 \end{prop}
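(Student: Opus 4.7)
The plan is to lower bound $E_\mu^\gamma(e^{\lambda\tau})$ by restricting to paths that start near the boundary $0$ (where $V$ is smallest, since $\alpha \ge \alpha'$) and exit the domain $D$ before any redistribution jump occurs. On such paths the process coincides with plain Brownian motion, whose exit time moment generating function blows up at the critical level given by Proposition \ref{pr:momgen}(2); balancing that blow-up threshold against the cost of avoiding a jump will produce the scale $\gamma^{2/(\alpha+2)}$.

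First I will use the hypothesis $V(x) \asymp x^\alpha$ as $x \to 0^+$ to fix $\epsilon_0 > 0$ and $C_V > 0$ (depending only on $V$) such that $V(x) \le C_V x^\alpha$ on $(0, \epsilon_0)$. For $\epsilon \in (0, \epsilon_0]$ and $y \in (0, \epsilon)$, let $T$ denote the Brownian exit time from $(0, \epsilon)$. I will represent the first redistribution time as $T_J := \inf\{t \ge 0 : \gamma \int_0^t V(B_s)\,ds \ge \xi\}$ with $\xi$ an independent unit exponential, so that on $\{T_J > T\}$ the process $X$ coincides with $B$ on $[0, T]$ and hence $\tau \ge T$. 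Conditioning on $B$ to integrate out $\xi$, and then bounding $V(B_s) \le C_V \epsilon^\alpha$ on $[0, T)$, will give
$$E_y^\gamma(e^{\lambda \tau}) \ge E_y\bigl(e^{\lambda T} e^{-\gamma \int_0^T V(B_s)\,ds}\bigr) \ge E_y\bigl(e^{(\lambda - \gamma C_V \epsilon^\alpha) T}\bigr).$$
Applying Proposition \ref{pr:momgen}(2) with $\rho = \lambda - \gamma C_V \epsilon^\alpha$ on $(0, \epsilon)$ forces the last expectation to equal $+\infty$ as soon as $\lambda \ge \gamma C_V \epsilon^\alpha + \pi^2/(2\epsilon^2)$. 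Integrating over $y \in (0, \epsilon)$ against the uniform $\mu$ then yields $E_\mu^\gamma(e^{\lambda \tau}) = \infty$ under the same condition.

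To optimize, I will minimize $f_\gamma(\epsilon) := \gamma C_V \epsilon^\alpha + \pi^2/(2\epsilon^2)$ in $\epsilon$. The critical point is $\epsilon_* \asymp \gamma^{-1/(\alpha+2)}$ with minimum value of order $\gamma^{2/(\alpha+2)}$ (for $\alpha > 0$; the case $\alpha = 0$ is immediate), yielding the constant $\theta_0$. For $\gamma$ large enough that $\epsilon_* \le \epsilon_0$, this choice works directly; for smaller $\gamma$ one absorbs a bounded factor into $\theta_0$ by fixing $\epsilon = \epsilon_0$. I expect the only technical point to be the Poisson construction that delivers the identity $P(T_J > T \mid B) = \exp(-\gamma \int_0^T V(B_s)\,ds)$; once that is in place, pinning down the correct scale $\epsilon \asymp \gamma^{-1/(\alpha+2)}$ that balances jump avoidance against Brownian heavy tails is the real content of the argument, and is elementary calculus.
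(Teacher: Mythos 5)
Your proposal is correct and follows essentially the same route as the paper: localize near $0$ at the scale $\epsilon \asymp \gamma^{-1/(\alpha+2)}$, bound the jump intensity there by $\asymp \gamma^{2/(\alpha+2)}$, and invoke the blow-up criterion of Proposition \ref{pr:momgen}-(2), balancing the two contributions. The only (immaterial) difference is bookkeeping of the first jump: you weight by the Feynman--Kac survival factor $e^{-\gamma\int_0^T V(B_s)\,ds}$, whereas the paper stochastically dominates $\tau$ by $\sigma_x\wedge J'$ with $J'$ an independent exponential at the maximal rate and conditions on $J'$; both reduce to the same threshold.
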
 
 \begin{proof}
Fix $x \in (0,\frac 14)$. Let $\sigma_x := \tau \wedge \tau_{2x}$  denote the exit time of the diffusion from the interval $(0,2x)$. Under $P_x^\gamma$,  $\tau \ge \sigma_x \wedge J$, where $J$ is the time of the first jump.  Since the jump rate on the interval $(0,2x)$ is bounded above by $\rho:= c_1 \gamma x^\alpha$,    it follows that $\tau$  stochastically dominates $\sigma_x \wedge J'$ where $J'$ is exponential with rate $\rho$, independent of $\sigma_x$.  Let $\lambda > \rho$. Conditioning on $J'_x$, we obtain 
\begin{align*} 
  E^\gamma_x \left ( e^{\lambda \tau}\right)  &\ge E^\gamma_x\left (  e^{\lambda (\sigma_x \wedge J'_x) } \right) = E_x \left ( \rho \int_0^{\infty} e^{\lambda   (\sigma_x\wedge y)  } e^{-\rho y} dy \right) \\
  & = E_x \left ( \rho \int_0^{\sigma_x} e^{(\lambda-\rho)y} dy + e^{(\lambda-\rho) \sigma_x} \right ) \\
  & = \frac{ \rho}{\lambda-\rho } \left (  E_x e^{(\lambda-\rho) \sigma_x}-1 \right)  + E_x\left (  e^{(\lambda-\rho) \sigma_x}\right).\\
  &=  \frac{\lambda}{\lambda-\rho} E_x \left ( e^{(\lambda-\rho) \sigma_x}\right)  - \frac{\rho}{\lambda-\rho}.
  \end{align*}
  From Proposition \ref{pr:momgen}-(2) we conclude that  
 $E_x \left ( e^{(\lambda-\rho) \sigma_x}\right)<\infty$ if and only if 
$\lambda -\rho < \frac{\pi^2}{8x^2}$. Thus,  whenever $\lambda \ge  \rho + c_2 x^{-2}=c_1 \gamma x^\alpha + c_2 x^{-2}$,  one has $E^\gamma_x \left ( e^{\lambda \tau}\right)  =\infty$. 
Suppose now that $x= c\gamma^{-\frac{1}{\alpha+2}}$ for some $c>0$.  Then $E_x^\gamma \left ( e^{\lambda \tau}\right)=\infty$, provided that 
 $$ \lambda \ge c_1 \gamma c^\alpha  \gamma ^{-\frac{\alpha}{\alpha+2}} +c_2 c^{-2} \gamma^{\frac{2}{\alpha+2}} = \left ( c_1 c^\alpha + c_2 c^{-2} \right)  \gamma^{\frac{2}{\alpha+2}}.$$
 Let $\theta_0: = \min_{c \in [1,2]} \left (c_1 c^{\alpha} + c_2 c^{-2} \right)$. If $\lambda \ge  \theta_0 \gamma^{\frac{2}{\alpha+2}}$, then 
   $$ E^\gamma_\mu \left ( e^{\lambda \tau}\right)  \ge \int_{x\in [1,2] \gamma^{-\frac{1}{\alpha+2}}} E_x^\gamma \left (e^{\lambda \tau} \right)  dx  = \infty.$$
\end{proof} 
For $\lambda \in \R$ and $t\ge 0$, let
 $$ R_\lambda(t):= \lambda t - \gamma \int_0^t V(X(s)) ds.$$  
 We have the following proposition,  expressing the moment generating function purely in terms of Brownian expectations. 
  \begin{prop} Let $x\in D$. Then 
  \label{pr:Rmomgen} 
  \begin{enumerate} 
  \item   $\displaystyle E^\gamma_x  \left ( e ^{\lambda \tau}\right) = E^\gamma_x\left  ( e^{\lambda J} \ch_{\{J< \tau\}}\right)  E^\gamma_\mu \left(e^{\lambda \tau}\right) + E^\gamma_x \left( e^{\lambda \tau}\ch_{\{\tau < J\}}\right).$
  \item $\displaystyle E^\gamma_x \left (e^{\lambda \tau} \ch_{\{\tau<J\}}\right) = E_x \left( e^{R_\lambda (\tau)}\right).$
  \item $\displaystyle E^\gamma_x \left (e^{\lambda J} \ch_{\{J<\tau\}}\right)=  \lambda E_x \left( \int_0^\tau e^{R_\lambda (t)} \right) + 1-  E_x \left(e^{R_\lambda (\tau)}\right).$
  \item $\displaystyle  E^\gamma_\mu (e^{\lambda \tau} )= \frac{1}{1- \frac{ \lambda E_\mu \left ( \int_0^\tau e^{R_\lambda (t) } dt\right)}  {E_\mu \left(e^{R_\lambda(\tau)}\right)}}.$
  \end{enumerate}
  \end{prop}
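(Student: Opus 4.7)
The plan is to view the first jump time $J$ as the first event of a Cox (doubly stochastic Poisson) process whose intensity $\gamma V(X(t))$ is driven by the Brownian path of $X$. Conditionally on $X$, the survival function of $J$ is $P(J>t\mid X)=\exp(-\gamma\int_0^t V(X(s))\,ds)$ and its conditional density on $(0,\tau)$ is $\gamma V(X(t))\exp(-\gamma\int_0^t V(X(s))\,ds)$. Once this representation is in hand, each of the four identities reduces to a short computation.

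For (1) I would apply the strong Markov property at $J$: on $\{J<\tau\}$, the post-jump location $X(J)$ has law $\mu$ independently of $\mathcal{F}_J$, so the remaining lifetime is distributed as $\tau$ under $P^\gamma_\mu$. This factors
\begin{equation*}
E^\gamma_x(e^{\lambda\tau}\ch_{\{J<\tau\}}) = E^\gamma_x(e^{\lambda J}\ch_{\{J<\tau\}})\,E^\gamma_\mu(e^{\lambda\tau}),
\end{equation*}
and adding $E^\gamma_x(e^{\lambda\tau}\ch_{\{\tau<J\}})$ to both sides yields (1). For (2) I would condition on the Brownian path; since $\tau$ is $\sigma(X)$-measurable, the event $\{\tau<J\}$ contributes the factor $P(J>\tau\mid X)=\exp(-\gamma\int_0^\tau V(X(s))\,ds)$, and $e^{\lambda \tau}$ times this factor is exactly $e^{R_\lambda(\tau)}$.

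For (3), conditioning on $X$ rewrites the left-hand side as the pathwise integral of $e^{\lambda t}$ against the conditional density of $J$ on $(0,\tau)$. The key observation is the exact differential identity $\gamma V(X(t))\exp(-\gamma\int_0^t V\,ds) = -\frac{d}{dt}\exp(-\gamma\int_0^t V\,ds)$; one integration by parts splits the pathwise integral into the boundary term $1-e^{R_\lambda(\tau)}$ and the remainder $\lambda\int_0^\tau e^{R_\lambda(t)}\,dt$, and taking expectations yields (3). Part (4) is then algebraic: specializing (1) to the starting distribution $\mu$ gives $E^\gamma_\mu(e^{\lambda\tau})\bigl(1-E^\gamma_\mu(e^{\lambda J}\ch_{\{J<\tau\}})\bigr) = E^\gamma_\mu(e^{\lambda\tau}\ch_{\{\tau<J\}})$, and substituting the formulas from (2) and (3) and clearing denominators produces the stated closed form.

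The only genuinely substantive step is the Cox-process description of $J$ conditional on the Brownian path, which is essentially a restatement of what it means for $X$ to be generated by $-L_{\gamma,\mu,V}$; once that is accepted the rest is bookkeeping. A minor care point is that some of the expectations in (4) may be infinite for large $\lambda$ (Proposition~\ref{pr:crudebd} shows this for $\lambda \gtrsim \gamma^{2/(\alpha+2)}$), so the identity in (4) should be read as an equality in $[0,\infty]$ or, equivalently, restricted to the range where the denominator is positive; I do not expect this to pose any real difficulty, since the subsequent analysis of $\lambda_0(\gamma)$ via \eqref{eq:momgen_rep} only uses the formula on the finiteness regime.
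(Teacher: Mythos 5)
Your proof is correct and follows essentially the same route as the paper: the strong Markov property at $J$ for (1), the conditional survival function $P(J>t\mid X)=e^{-\gamma\int_0^t V(X(s))\,ds}$ (your Cox-process description) for (2) and (3), and algebra for (4), with the same caveat that the identities first hold for small $\lambda$ and extend to the finiteness regime. The only cosmetic difference is in (3), where you integrate by parts against the conditional density of $J$ while the paper uses the layer-cake identity $e^{\lambda J}=1+\lambda\int_0^\infty e^{\lambda t}\ch_{\{J>t\}}\,dt$ together with identity (2); these are the same computation.
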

  This result essentially allows to reduce the problem to estimating the asymptotic behavior of the Brownian expectations appearing on the right-hand side of each of the identities. This is carried out in Section \ref{sec:BM_comp} below. Since  these expectations are also solutions to some related ordinary differential equations, it is interesting to ask for independent analysis not based on the probabilistic analysis. Specifically, let  ${\cal A}$ denote the differential operator ${\cal A}u := \frac 12 u'' + ( \lambda - \gamma V) u $. Then  $E_x \left ( e^{R_\lambda (\tau)} \right ) $ is  known as the gauge associated to ${\cal A}$ on $D$, that is, the solution to 
 $$ \begin{cases} {\cal A} u = 0 & \mbox{ on D}\\ u|_{\partial D} =1,\end{cases} $$
  and $E_x  \left( \int_0^\tau e^{R_\lambda (t)} dt \right)$ is a potential for ${\cal A}$ on $D$, or total mass of Green's measure, solving : 
  $$ \begin{cases} {\cal A} u = -1& \mbox{ on D}\\ u|_{\partial D}=0.\end{cases}$$
  \begin{proof} 
   The first identity  follows directly  from the strong Markov property. Integrating both sides of the first identity with respect to $\mu$ we obtain 
\begin{equation} 
\label{eq:int_id} 
 E^\gamma_\mu \left( e ^{\lambda \tau}\right) = E^\gamma_\mu  \left( e^{\lambda J} \ch_{\{J< \tau\}}\right)   E^\gamma_\mu \left(e^{\lambda \tau} \right) + E^\gamma_\mu \left( e^{\lambda \tau}\ch_{\{\tau < J\}}\right).
 \end{equation} 

  In what follows we assume $\lambda $ is less than the principal eigenvalue for $-\frac 12 \Delta$ on $D$. In particular, $\sup_x E_x \left ( e^{\lambda \tau}\right) <\infty$. The identities (2)-(4) extend beyond this domain by analyticity. To prove the second identity, observe that 
$$  e^{\lambda \tau}\ch_{\{\tau < J\}}= \left ( \lambda \int_0^\infty e^{\lambda t} \ch_{\{\tau>t\}} dt +1 \right) \ch_{\{\tau< J\} }.$$
Write $I(t):= \int_0^t V(X(s)) ds$. Thus, 
\begin{align}
\nonumber
 E^\gamma_x \left(e^{\lambda \tau}\ch_{\{\tau < J\}}  \right) &= \lambda \int_0^\infty e^{\lambda t} P_x^\gamma ( \tau>t; \tau<J) dt + P_x^\gamma(\tau<J)\\
 \nonumber
 & = \lambda E_x \left ( \int_0^\infty e^{\lambda t} \ch_{\{ \tau>t\}} e^{-\gamma I(\tau) }dt \right) + E_x \left(e^{-\gamma I(\tau)}\right)\\
 \nonumber
 & = E_x \left ( (e^{\lambda \tau}-1) e^{-\gamma I(\tau)}\right ) +E_x \left(e^{-\gamma I(\tau)}\right) \\
 \label{eq:taubeforeJ}
 & = E_x \left(e^{R_\lambda(\tau)}\right).
 \end{align}
 This proves the second identity. 
 We turn to the third identity. 
\begin{align*} 
   e^{\lambda J} \ch_{\{J < \tau\}} &= \left ( \lambda  \int_0^\infty e^{\lambda t} \ch_{\{J >t\}} +1 \right ) \ch_{\{J<\tau\}}\\
    & = \lambda \int_0^\infty e^{\lambda t} \ch_{\{\tau>t\}} ( \ch_{\{J >t\}} - \ch_{\{\tau<J\}}) dt +\ch_{\{J<\tau\}}.
\end{align*}
Thus, 
\begin{align}
\nonumber 
 E^\gamma_x 
\left (e^{\lambda J}\ch_{\{J<\tau \}}  \right) &= \lambda \int_0^\infty e^{\lambda t} P_x^\gamma (\tau \wedge J >t ) dt - \lambda \int_0^\infty e^{\lambda t} P_x^\gamma (\tau>t; \tau<J)dt +  P_x^\gamma(J<\tau )\\
\nonumber
 & \overset{\eqref{eq:taubeforeJ}}{=} \lambda E_x \left ( \int_0^\tau  e^{R_\lambda(t) }dt \right)-\left ( E^\gamma_x \left ( e^{\lambda \tau}\ch_{\{\tau<J\}}\right) -P_x^\gamma (\tau<J)\right )  + 1-P_x^\gamma (\tau<J)\\
  \label{eq:whatheck}
& \overset{\eqref{eq:taubeforeJ}}{=} \lambda E_x \left( \int_0^\tau e^{R_\lambda(t) } dt \right) + 1-  E_x \left(e^{R_\lambda(\tau)} \right).
 \end{align}
It remains to prove the last identity. Observe that $\lambda \int_0^\tau e^{R_\lambda (t)} dt +1 \le e^{\lambda \tau}$, and that $e^{R_\lambda (\tau)}\le e^{\lambda \tau}$. Therefore since $\sup_x E_x \left ( e^{\lambda \tau}\right) < \infty$ by assumption, it follows from dominated convergence applied to the right-hand side of \eqref{eq:whatheck} that  
$$\lim_{\lambda \to 0} E_{\mu}^\gamma \left ( e^{\lambda J} \ch_{\{J<\tau\}}\right) =1- E_\mu \left ( e^{-\gamma \int_0^{\tau} V(X(t)) dt}\right) = P_\mu^\gamma(J<\tau)<1.$$ 
  Consequently, we obtain from  \eqref{eq:int_id} that 
 $$ E_{\mu}^\gamma  \left ( e^{\lambda \tau}\right) = \frac{ E_{\mu}^{\gamma} \left ( e^{\lambda \tau} \ch_{\{\tau < J\}}\right)}{1-  E_{\mu}^\gamma \left ( e^{\lambda J} \ch_{\{J<\tau\}}\right)},$$
 and the right-hand side is finite. 
Plugging the second and third identities into this we obtain 
  $$ E_{\mu}^\gamma  \left ( e^{\lambda \tau}\right) = \frac{ E_\mu \left ( e^{R_{\lambda} (\tau)} \right) }{E_\mu \left ( e^{R_{\lambda} (\tau)} \right) -\lambda E_{\mu} \left ( \int_0^\tau e^{R_\lambda (t)} dt \right)  },$$
   and the result follows.
% Summarizing, we have 
 %\begin{align}
%\nonumber
 % E^\gamma_\mu (e^{\lambda \tau}) &=  \frac{E^\gamma_\mu (e^{\lambda \tau-\gamma I(\tau)})}{ E^\gamma_\mu (e^{\lambda \tau-\gamma I(\tau)} )-\lambda E^\gamma_\mu ( \int_0^\tau e^{\lambda t - \gamma I(t) } dt) }\\
%  \label{eq:stoch_rep}
 % & = \frac{1}{1- \frac{ \lambda E^\gamma_\mu ( \int_0^\tau e^{\lambda t - \gamma I(t) } dt) } {E^\gamma_\mu (e^{\lambda \tau-\gamma I(\tau)}) }}.
 %\end{align} 
 \end{proof}
\section{Brownian Computations} 
\label{sec:BM_comp}
%There is no loss of generality assuming  $V(x) \ge d(x,\partial D)^{\alpha}$ as $x\to \partial D$. 
In this section we obtain the main estimates needed to prove Theorem \ref{th:asymp}. We need some definitions. 
Below we let $r=r(\gamma)= r(\gamma,\alpha) := \gamma^{-\frac {1}{\alpha+2}}$, and 
 $$ h=h(\gamma)=h(\gamma,\alpha) := \begin{cases} r(\gamma) & \alpha<1; \\ r(\gamma)/\ln \gamma & \alpha=1; \\ r(\gamma)^{\alpha} & \alpha>1.\end{cases}$$
  The function $h$ was chosen to satisfy that $\gamma h(\gamma)$ is equal to the right-hand side of \eqref{eq:asymp}. We also define a function  $\lambda=\lambda(\theta,\gamma,\alpha)$  by letting
 \begin{equation}
 \label{eq:lambda} \lambda (\theta,\gamma,\alpha) :=\theta \gamma h(\gamma)=\theta \begin{cases} \gamma^{\frac{\alpha+1}{\alpha+2}}& \alpha <1;\\
 \frac{ \gamma^{\frac 23}}{\ln \gamma}& \alpha=1;\\ \gamma^{\frac{2}{\alpha+2}}& \alpha >1.\end{cases}
 \end{equation} 
 In what follows, in order to simplify notation, we sometimes omit the dependence of the functions $r,h$ and $\lambda$ on some of their arguments. \\
 
 We begin with  following simple lemma needed for  our estimates and whose proof will be omitted. 

 \begin{lemma} ~
 \label{lem:hasit} 
 \begin{enumerate} 
 \item For $\gamma\ge e$,     $h(\gamma) \le  r(\gamma)^{\alpha}$, and when $\alpha \le 1$, one has $h(\gamma) = o(r^\alpha (\gamma))$ as $\gamma \to \infty$. 
 \item  For $c >0$, $h(\gamma) \int_{r(\gamma)<x <  c} \frac{1}{x^\alpha} dx \underset{\gamma \to\infty}{\asymp} r(\gamma)$
 \end{enumerate} 
 \end{lemma}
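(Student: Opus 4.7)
The plan is a direct case analysis on the three regimes $\alpha<1$, $\alpha=1$, $\alpha>1$ used to define $h$, since both parts depend only on which branch is active. There is no probabilistic or PDE content here; the lemma is a bookkeeping statement motivating the particular scaling $h$ chosen in the preceding display, and the only tools required are the elementary antiderivative of $x^{-\alpha}$ together with the fact that $r(\gamma)\to 0$ as $\gamma\to\infty$.

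For Part (1) I would write the ratio $h(\gamma)/r(\gamma)^\alpha$ explicitly in each branch. When $\alpha<1$, the ratio equals $r(\gamma)^{1-\alpha}$, which tends to $0$ because $r\to 0$ and $1-\alpha>0$. When $\alpha=1$, the ratio equals $1/\ln\gamma$, which is at most $1$ precisely when $\gamma\ge e$ and tends to $0$. When $\alpha>1$, the ratio is identically $1$. Combining the three observations gives both conclusions of Part (1) immediately.

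For Part (2) I would compute $\int_{r(\gamma)}^{c} x^{-\alpha}\,dx$ in each branch. For $\alpha<1$ the integral converges to $c^{1-\alpha}/(1-\alpha)$, a positive constant, and since $h=r$ the product is $\underset{\gamma\to\infty}{\asymp} r$. For $\alpha=1$ the integral equals $\ln c + \tfrac{1}{3}\ln\gamma$, which is $\underset{\gamma\to\infty}{\asymp}\ln\gamma$, and multiplying by $h=r/\ln\gamma$ yields $\underset{\gamma\to\infty}{\asymp} r$. For $\alpha>1$ the integral equals $(r^{1-\alpha}-c^{1-\alpha})/(\alpha-1)$, whose first term dominates as $r\to 0$, giving an asymptotic of $r^{1-\alpha}$; multiplying by $h=r^\alpha$ again yields $r$.

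I do not anticipate a genuine obstacle. The only point requiring a moment of care is verifying that the $\asymp$ in Part (2) admits both a strictly positive liminf and a finite limsup when $\alpha\ne 1$; for $\alpha<1$ this follows from convergence of the integral to a positive constant, and for $\alpha>1$ it reduces to checking that $c^{1-\alpha}/r^{1-\alpha}\to 0$, which is immediate since $c$ is fixed, $r\to 0$, and $1-\alpha<0$.
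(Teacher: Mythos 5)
Your case analysis is correct; the paper omits the proof of this lemma entirely (``whose proof will be omitted''), and your direct three-regime verification, using only the antiderivative of $x^{-\alpha}$ and $r(\gamma)\to 0$, is exactly the intended elementary argument. The only implicit point worth making explicit is that for $\alpha<1$ the inequality $h\le r^\alpha$ for $\gamma\ge e$ needs $r(\gamma)^{1-\alpha}\le 1$, which holds because $r(\gamma)=\gamma^{-1/(\alpha+2)}\le 1$ for $\gamma\ge 1$.
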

 \begin{lemma}
 \label{lem:upperbd}
 There exists a constant $\theta_1 \in (0,\infty]$ and positive constants $C_1,C_2$ depending only on $V$, such that if  $\theta < \theta_1$  then there exists a positive constant   $\gamma_1:=\gamma_1(\alpha,\theta)$ and  $\gamma > \gamma_1$ implies 
 $$ E_\mu \left (  e^{R_\lambda (\tau)}\right)  \le C_1  r(\gamma),$$
  and 
  $$ |\lambda|  E_\mu \left ( \int_0^{\tau} e^{R_\lambda(t)} dt \right)\le C_2 |\theta|  r(\gamma),$$
Furthermore
  \begin{enumerate} 
  \item 
  For fixed $\alpha$, the function $\theta \to \gamma_1(\alpha,\theta)$ is nondecreasing. 
  \item If $\alpha\le 1$ then   $\theta_1 = \infty$.
  \end{enumerate} 
 
   % \item Let $x\in D$ and $\theta< \theta_1$. Then $\limsup_{\gamma \to \infty} E_x\left ( e^{R_\lambda(\tau)}\right) + \lambda E_x \left ( \int_0^\tau e^{R_\lambda(t)} dt \right) < \infty$. 
   \end{lemma}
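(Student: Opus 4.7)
The plan is to exploit the Feynman--Kac characterization of $u(x) := E_x(e^{R_\lambda(\tau)})$ and $w(x) := E_x\bigl(\int_0^\tau e^{R_\lambda(t)}\,dt\bigr)$ as the gauge and potential of $\mathcal{L} := \frac{1}{2}\partial_x^2 + (\lambda - \gamma V)$ (noted just after Proposition~\ref{pr:Rmomgen}): $\mathcal{L}u = 0$ with $u|_{\partial D} = 1$, and $\mathcal{L}w = -1$ with $w|_{\partial D} = 0$. I would establish pointwise bounds $u(x) \le C\phi_\theta(x/r)$ and $w(x) \le Cr^2\psi_\theta(x/r)$ near each boundary for suitable limit profiles $\phi_\theta, \psi_\theta$, prove exponential-in-$\sqrt{\gamma}$ decay in the bulk, and integrate.

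The natural rescaling near $0$ is $y = x/r$, $\tilde u(y) := u(ry)$, under which $\mathcal{L}u = 0$ becomes $\frac{1}{2}\tilde u''(y) + \bigl(r^2\lambda - r^2\gamma V(ry)\bigr)\tilde u(y) = 0$. Since $\gamma r^{\alpha+2} = 1$ and $V(x) \asymp x^\alpha$ near $0$, on any fixed interval $y \in [0, L]$ the coefficient $r^2\gamma V(ry)$ is trapped between positive multiples of $y^\alpha$ once $\gamma$ is large, while $r^2\lambda = \theta \gamma h r^2$ equals $\theta$ when $\alpha > 1$ and tends to $0$ when $\alpha \le 1$. Define $\theta_1$ to be the principal Dirichlet eigenvalue of $-\frac{1}{2}\partial_y^2 + y^\alpha$ on $[0, \infty)$ when $\alpha > 1$, and $\theta_1 := +\infty$ when $\alpha \le 1$. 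For $\theta < \theta_1$, the formal limit ODE admits a positive, monotonically decreasing solution $\phi_\theta$ with $\phi_\theta(0) = 1$, super-exponential decay, and $\phi_\theta \in L^1[0, \infty)$. Fixing $\epsilon > 0$ small enough that $V(x)/x^\alpha$ is comparable on $[0, \epsilon]$, I would use $C\phi_\theta(x/r)$ with an $o(1)$ multiplicative correction as a super-solution to the rescaled ODE on $[0, \epsilon]$ and invoke the elliptic comparison principle to conclude $u(x) \le C\phi_\theta(x/r)$ there; a symmetric argument with $\alpha'$ handles the right boundary.

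In the bulk $[\epsilon, 1-\epsilon']$, let $v_0 := \min V > 0$; for $\gamma \ge \gamma_1(\alpha, \theta)$ one has $\lambda \le \gamma v_0/2$, so by strong Markov at $\sigma := \tau_\epsilon \wedge \tau_{1-\epsilon'}$ and $R_\lambda(\sigma) \le -\gamma v_0 \sigma/2$, Proposition~\ref{pr:momgen}(1) gives $u(x) \le \max(u(\epsilon), u(1-\epsilon'), 1)\cdot E_x(e^{-\gamma v_0\sigma/2}) \le e^{-c\sqrt{\gamma}}$ uniformly. Integration yields $E_\mu(e^{R_\lambda(\tau)}) \le 2r\|\phi_\theta\|_{L^1} + e^{-c\sqrt{\gamma}} \le C_1 r$. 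The argument for $w$ is parallel: $\hat w(y) := w(ry)/r^2$ satisfies $\frac{1}{2}\hat w'' + (r^2\lambda - r^2\gamma V(ry))\hat w = -1$, $\hat w(0) = 0$, and the same comparison approach gives $w(x) \le Cr^2\psi_\theta(x/r)$ near the boundary for a bounded nonnegative particular solution $\psi_\theta$; in the bulk, $w \le 2/(\gamma v_0)$ directly. Using Lemma~\ref{lem:hasit} together with the identities $|\lambda|r^3 = |\theta|(\gamma h r^2)r \le |\theta|r$ and $|\lambda|/\gamma = |\theta|h \le |\theta|r$, integration yields $|\lambda| E_\mu(\int_0^\tau e^{R_\lambda(t)}\,dt) \le C_2|\theta|r$. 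The monotonicity of $\theta \mapsto \gamma_1(\alpha,\theta)$ in (1) follows because the super-solution degrades as $\theta \uparrow \theta_1$; the claim $\theta_1 = \infty$ in (2) follows because $r^2\lambda \to 0$ for $\alpha \le 1$, so the $\theta$-dependence disappears in the limit ODE.

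I expect the main obstacle to be the robust construction of the boundary super-solution uniformly in $\gamma$ for all $\theta < \theta_1$, particularly in the borderline case $\alpha = 1$, where $r^2\lambda = \theta/\ln\gamma$ and the explicit $1/\ln\gamma$ factor in the definition of $h$ is essential to offset a logarithmic divergence in $\int_0^{\epsilon/r}\psi_\theta(y)\,dy$. Indeed for $\alpha \le 1$, $\psi_\theta$ is not globally in $L^1[0,\infty)$ and must be controlled only up to $y = \epsilon/r$ through Lemma~\ref{lem:hasit}(2), where the $y^{-\alpha}$ growth of $\psi_\theta$ at infinity is exactly balanced by $h\int_{r}^\epsilon x^{-\alpha}\,dx \asymp r$. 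Making this matching between the rescaled asymptotics and the interior bulk estimate quantitatively sharp enough to deliver the stated constants $C_1, C_2$ independent of $\theta$ (as claimed in the statement) is where the proof's most careful bookkeeping should lie.
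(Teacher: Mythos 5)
Your route (Feynman--Kac/ODE characterization, boundary-layer rescaling $y=x/r$, super-solutions for the limit operator $-\frac12\partial_y^2+cy^\alpha$ on the half-line, plus an exponentially small bulk contribution) is genuinely different from the paper's, which never touches the ODEs: it works entirely probabilistically, bootstrapping via the strong Markov property at the stopping times $\sigma_{r_1},\sigma_{r_3},\sigma_{\kappa^{-1}x}$ and the explicit $\sin/\sinh/\cosh$ formulas of Proposition~\ref{pr:momgen}, obtaining a uniform bound $1+c_1$ in the layer $d(x,\partial D)\le r_2$ and the decay $e^{-c_4(x/r_2)^{\alpha/2+1}}$ outside it, then integrating. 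Your bookkeeping for the second bound (the role of $h/r^\alpha$, the borderline $\ln\gamma$ at $\alpha=1$, Lemma~\ref{lem:hasit}-(2)) is exactly right and matches the paper's. The analytic route would, if completed, identify the sharp threshold for $\theta$, which the paper does not attempt.

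However, there are two concrete gaps. First, your choice of $\theta_1$ as the principal Dirichlet eigenvalue of the model operator is incompatible with the statement you are proving: the constants $C_1,C_2$ must depend only on $V$, hence be uniform over $\theta<\theta_1$, but as $\theta$ increases to that eigenvalue the profile $\phi_\theta$ and $\|\phi_\theta\|_{L^1}$ blow up (the gauge diverges at criticality), so no uniform $C_1$ exists with that $\theta_1$; moreover, since $V(x)\asymp x^\alpha$ only gives $V\ge\delta x^\alpha$, the relevant model potential is $\delta y^\alpha$, and the eigenvalue of $-\frac12\partial_y^2+y^\alpha$ may exceed the true gaugeability threshold, in which case no positive decaying super-solution exists at all. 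You must take $\theta_1$ strictly below the model eigenvalue (the paper takes the explicit, far-from-sharp $\pi^2/(32\kappa^6)$). Second, the ``elliptic comparison principle'' for $\frac12 u''+(\lambda-\gamma V)u$ is not free when the zeroth-order coefficient $\lambda-\gamma V$ is positive near $\partial D$: it holds only when the principal eigenvalue of the operator on the region in question is positive, which is essentially the gaugeability you are trying to establish; relatedly, $u(x)=E_x(e^{R_\lambda(\tau)})$ must first be shown finite before it can be identified with the BVP solution. Both points are repairable (existence of a strict positive super-solution yields the refined maximum principle, and finiteness follows by truncating $R_\lambda(\tau)\wedge K$ and monotone convergence, as the paper does), and the boundary-layer/bulk matching you defer to ``careful bookkeeping'' requires an explicit two-region bootstrap of the type the paper carries out in \eqref{eq:smll_rep2}--\eqref{eq:max_smll}, since the outer boundary value $u(\epsilon)$ of the layer problem is not known a priori.
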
 

\begin{proof} 
We first need some preparation  before getting into the main argument. 
The preparation consists of several steps. The first is a reduction to symmetric $V$.  
     Since $\alpha >\alpha'\ge 0$, we have that $V(x) \underset{x\to 0^+} {\asymp}x^\alpha \le x^{\alpha'} \underset{x\to 0^+}{\asymp} V(1-x)$. Since in addition $V$ is strictly positive and continuous on $D$,  we can find $\hat V\in C(\ol{D})$ such that $V> \hat V>0$ in $D$, $\hat V(x)\underset{x\to\partial D}{\asymp} d(x,\partial D)^\alpha$,  and $\hat V$ is symmetric. That is $\hat V(1-x) = \hat V(x)$. Letting $\hat R_\lambda$ denote the analog of $R_\lambda$ with $\hat V$ in place of $V$. Then  $R_\lambda \le \hat R_\lambda$. Therefore to prove the lemma, there is no loss of generality assuming that  $V$ is symmetric and $V(x) \underset{x\to\partial D}{\asymp} d(x,\partial D)^\alpha$. \\
     
     The next step in the preparation is to obtain the constants $\theta_1,\gamma_1$ in the Lemma. We need to define a family of stopping times for the Brownian motion. For $l \in (0,\frac 12]$, let 
     $$ \sigma_l:= \inf  \{t \ge 0 : d(X_t,\partial D) = l\}.$$
     Therefore $\sigma_l = \tau_l \wedge \tau_{1-l}$, where $\tau_l$ was defined in \eqref{eq:tau_y}. Let $\delta>0$ be such that  $V(x) \ge \delta d(x,\partial D)^{\alpha}$ for all $x \in D$. Choose $\kappa>1$ such that $\delta \kappa^\alpha\ge 2$, and let $r_j(\gamma) = \kappa^j r(\gamma)$ for $j=1,2,3$. Below we will omit the dependence of $r_j$ on $\gamma$. When $\alpha>1$, let $\theta_1:= \frac{\pi^2}{32 \kappa^6}<1$, and otherwise let $\theta_1:=\infty$. Assume that $\theta < \theta_1$. Since we are looking for upper bound, there is no loss of generality assuming  $\theta >0$. 
 Assume first that  $\alpha \le 1$. Since by  Lemma \ref{lem:hasit}-(1), $h=o(r^\alpha)$, we can find $\gamma_1:=\gamma_1(\alpha,\theta)<\infty$ such that for all $\gamma> \gamma_1$,  $h/r^\alpha$  satisfies  $h/r^\alpha < \frac{ \pi^2}{32 \kappa^6 \theta}<\frac 1\theta$. In addition, for fixed $\alpha$, the function $\theta \to \gamma_1(\alpha,\theta)$ could be chosen  as nondecreasing.  We have  
  $ \lambda = \theta \gamma h < \gamma r^\alpha$, as well as 
  $$\sqrt{2\lambda} r_3=  \sqrt{2\theta} \gamma^{1/2} h^{1/2} \kappa^3 r = \sqrt{2\theta }\kappa^3  r^{-\frac{\alpha+2}{2}+1} h^{1/2} = \sqrt{2\theta } \kappa^3 (h/r^{\alpha})^{1/2} < \frac{\pi}{4}.$$
    If  $\alpha> 1$, then $h=r^{\alpha}$ and since $\theta < \theta_1$, and $\theta_1<1$, we obtain 
    $$\lambda < \gamma r^{\alpha}\mbox{ and }  \sqrt{2\lambda} r_3 < \sqrt{2\theta_1} \gamma r^{\alpha/2+ 1} \kappa^3 =\frac{\pi}{4}$$
     for all $\gamma>0$. In this case we set $\gamma_1(\alpha,\theta):=0$. 
    Summarizing both cases, we proved that there exists $\theta_1 \in (0,\infty]$ and  $\gamma_1(\alpha,\theta)$, nondecreasing in $\theta$   such that for $\theta <\theta_1$ and $\gamma\ge \gamma_1$ we have  
     \begin{equation} 
     \label{eq:lambdasmll}
      \lambda < \gamma r^\alpha\mbox{ and } \sqrt{2\lambda} r_3 < \frac{\pi}{4},
     \end{equation} 
For the remainder of the proof we assume that $\theta \in (0,\theta_1)$ and $\gamma \ge \gamma_1$. \\

The next and the final  step in the preparation consists of several estimates to be later used. Let   $\rho := \lambda$, $a=0$ and $b:=r_3$. Then  $\sqrt{2\rho}(b-a) = \sqrt{2\lambda} r_3 < \frac{\pi}{4}$, and  it follows from Proposition \ref{pr:momgen}-(2) that  for $0<y<r_3$       $$E_y  \left(   e^{\lambda \tau}\ch_{\{\tau<\sigma_{r_3}\}}\right) =  \frac{ \sin( \sqrt{2\lambda } (r_3-y))}{\sin(\sqrt{2\lambda} r_3)}\mbox{, and }  E_y \left( e^{\lambda \sigma_{r_3}  }\ch_{\{\sigma_{r_3} <\tau\}}\right)  =  \frac{ \sin( \sqrt{2\lambda } y)}{\sin(\sqrt{2\lambda} r_3)}.$$
        Since $t\to \sin(t)$ is increasing on $[0,\frac{\pi}{4}]$, we obtain 
       \begin{equation}
       E_x  \left(   e^{\lambda \tau}\ch_{\{\tau<\sigma_{r_3}\}}\right) \le 1\mbox{, and }E_x \left( e^{\lambda  \sigma_{r_3}  }\ch_{\{\sigma_{r_3} <\tau\}}\right)  \le   \frac{ \sin( \sqrt{2\lambda } r_2)}{\sin(\sqrt{2\lambda} r_3)}\le \frac {c_1}{1+c_1}<1,
        \label{eq:allbest}
       \end{equation} 
        where $c_1$ is the universal constant  satisfying $ \frac{c_1}{1+c_1}  = \sup_{t\in (0,\frac \pi 4)}\frac{\sin({\kappa^{-1}  t})}{\sin(t)}\in (0,1)$. \\
        
 Suppose that  $x\in D$ satisfies  $d(x,\partial D) \ge r_1$, and  assume $ 0\le s \le t \le \sigma_{r_1}$. Clearly,  
     $$V(X(s)) \ge \delta d(X(s),\partial D)^\alpha\ge \delta r_1^{\alpha}=\delta (\kappa r)^{\alpha} \ge 2 r^{\alpha}.$$
      Combining this with the first inequality in \eqref{eq:lambdasmll}, we obtain  $\lambda < \frac \gamma2  V(X(s))$.  Summarizing, 
    \begin{equation} 
   \label{eq:mainuppr} 
     R_\lambda(t) \le - \frac \gamma2 \int_0^t V(X(s)) ds\le -  \gamma r^{\alpha} t,~P_x\mbox{ a.s.},
     \end{equation} 
     when $d(x,\partial D) \ge r_1$ and $t \in [0,\sigma_{r_1}]$. We now obtain a similar upper bound in terms of $x$.  Suppose $d(x,\partial D) \ge r_2$. Without loss of generality, let  $x \in[r_2,\frac 12]$. Next, if $y\in D$ is such that $d (y,\partial D) \ge  \kappa^{-1} x$, then   $V(y) \ge \delta d(y,\partial D)^\alpha\ge \delta (\kappa^{-1} x)^{\alpha}$. As a result, if $t \in [0,\sigma_{\kappa^{-1}x}]$, we have 
$$ R_\lambda (t ) \le \lambda t - \delta\gamma   (\kappa^{-1} x)^{\alpha}t ,~P_x\mbox{ a.s.}$$ 
But by \eqref{eq:lambdasmll} and the fact that $\delta \kappa^{\alpha}\ge 2$, we have 
 $$\lambda < \gamma r^{\alpha} = \gamma (r_2 \kappa^{-2})^{\alpha} < \gamma \kappa^{-\alpha} ( x \kappa^{-1} )^{\alpha}  < \gamma\frac{\delta}{2} (x \kappa^{-1})^{\alpha}.$$
  Therefore, letting $c_2:= \delta \kappa^{-\alpha}/2$, we obtain 
   \begin{equation} 
   \label{eq:Rlambdaxfar} 
   R_\lambda(t) \le -c_2 \gamma x^{\alpha} t,~P_x\mbox{ a.s.},
   \end{equation} 
   provided $d(x,\partial D) \ge r_2$ and $t \in [0,\sigma_{\kappa^{-1} x}]$. \\
     
     We turn to the main proof, beginning with the first bound.  Fix $K\in\N$ and let $x\in \partial D$ satisfy $d(x,\partial D)\le r_2$. 
   By the Strong Markov property, 
      \begin{equation*} 
   \label{eq:comp} E_x \left ( e^{R_{\lambda} (\tau)\wedge K} \right ) \le E_x \left ( e^{\lambda \tau }\ch_{\{\tau<\sigma_{r_3}\}} \right)+ 
  E_x  \left (   e^{\lambda \sigma_{r_3} }\ch_{\{\sigma_{r_3}<\tau\}}\right) E_{r_3}\left( e^{R_{\lambda} (\tau)\wedge K}\right),
  \end{equation*} 
  and 
  \begin{equation*}
\label{eq:mormark}
 E_{r_3} \left (e^{R_{\lambda} (\tau)\wedge K} \right)  \le E_{r_3} \left ( e^{R_{\lambda} (\sigma_{r_1})}\right) E_{r_1} \left( e^{R_{\lambda}(\tau)\wedge K}\right).
 \end{equation*} 
 It follows from \eqref{eq:mainuppr} that $E_{r_3} \left ( e^{R_{\lambda} (\sigma_{r_1})}\right)<1$. Therefore 
  \begin{align}
     \nonumber
   E_x \left ( e^{R_{\lambda} (\tau)\wedge K} \right ) &\le  E_x \left ( e^{\lambda \tau }\ch_{\{\tau<\sigma_{r_3}\}} \right)+ 
  E_x  \left (   e^{\lambda \sigma_{r_3} }\ch_{\{\sigma_{r_3}<\tau\}}\right) E_{r_1} \left( e^{R_{\lambda}(\tau)\wedge K}\right)\\
   \label{eq:smll_rep2} 
  &\overset{\eqref{eq:allbest}}{ \le} 1 + \frac{c_1}{1+c_1} E_{r_1} \left( e^{R_{\lambda}(\tau)\wedge K}\right).
  \end{align}
Letting $x=r_1$, we obtain $  E_{r_1} \left( e^{R_{\lambda}(\tau)\wedge K}\right)\le 1+c_1$, and plugging the latter inequality back into \eqref{eq:smll_rep2}, we obtain  $E_x \left ( e^{R_{\lambda} (\tau)\wedge K} \right ) \le 1+ c_1$. Finally, letting  $K\to\infty$, monotone convergence gives
\begin{equation} 
    \label{eq:max_smll}
     E_x \left ( e^{R_{\lambda}(\tau)}\right) \le 1+c_1,   \end{equation} 
    when  $d(x,\partial D) \le r_2$.\\
   
Next we find an upper bound on $E_x \left(e^{R_\lambda(\tau)}\right)$ when $d(x,\partial D)\ge r_2$. Assume then that $x \in [r_2,\frac 12]$.  By the Strong Markov property,     
\begin{align}
\nonumber
      E_x \left ( e^{R_{\lambda} (\tau)} \right)& = E_x \left ( e^{R_{\lambda} (\sigma_{\kappa^{-1} x} )}\right) E_{\kappa^{-1}   x}\left (  e^{R_{\lambda} (\sigma_{r_1})}\right) E_{r_1} \left ( e^{R_{\lambda} (\tau)} \right)\\
      \nonumber 
      & \overset{\eqref{eq:mainuppr}}{\le}  E_x \left ( e^{R_{\lambda} (\sigma_{\kappa^{-1} x} )}\right)E_{r_1} \left ( e^{R_{\lambda} (\tau)} \right)\\
     %     \label{eq:far_est1}
     \nonumber
      & \overset{\eqref{eq:max_smll}}{\le}      E_x \left ( e^{R_\lambda(\sigma_{\kappa^{-1} x})}\right)(1+c_1)\\
      \nonumber
      & \overset{\eqref{eq:Rlambdaxfar}}{\le} E_x \left ( e^{- c_2\gamma x^{\alpha} \sigma_{\kappa^{-1} x}}\right)(1+c_1). 
      \end{align}     
   Letting $\rho:= c_2  \gamma x^\alpha,~a:=\kappa^{-1} x$ and $b:=1-a$ in Proposition \ref{pr:momgen}-(1), we obtain 
      \begin{align}
      \nonumber 
          E_x \left ( e^{- c_2\gamma x^{\alpha} \sigma_{\kappa^{-1} x}}\right) &=  \frac{ \cosh (\sqrt{2\rho} (x-\frac 12 ))}{\cosh (\sqrt{2\rho}(\frac 12 - \frac{x}{\kappa}))}\le 2 \frac{e^{\sqrt{2\rho}(\frac 12 - x) }}{e^{\sqrt{2\rho} (\frac 12 - \frac{x}{\kappa})}}\\
\nonumber
        & =2 e^{-\sqrt{2\rho} (1-\kappa^{-1} )  x}=2 e^{-c_3 \gamma^{1/2} x^{\alpha/2 + 1} } \\
                \label{eq:scaling}
         &= 2 e^{-c_4 (x/r_2)^{\alpha/2+1}}.
                 \end{align}
Summarizing, we proved that for $x \in [r_2,\frac 12]$,
      \begin{equation}
      \label{eq:goodone} E_x \left ( e^{R_{\lambda} (\tau)}  \right) \le 2 (1+c_1)  e^{-c_4 (x/r_2) ^{\alpha/2 + 1} }.
      \end{equation} 
We are ready to complete the proof of the first bound in the lemma. We have 
              \begin{align*} E_{\mu} \left ( e^{R_\lambda(\tau)} \right) & \le 
        \int_{d(x,\partial D)\le r_2} E_x \left ( e ^{R_{\lambda}(\tau)} \right)  dx + \int_{d(x,\partial D)\ge r_2}  E_x \left ( e ^{R_{\lambda}(\tau)} \right)  dx \\ 
        & \overset {\eqref{eq:max_smll},\eqref{eq:goodone}}{\le} 2r_2 (1+c_1) + 4 (1+c_1)   \int_{r_2<x<\frac 12   }   e^{-c_4 (x/r_2) ^{\alpha/2 + 1} } 
        dx \\
         &\le  4 (1+c_1) r_2  \left ( 1  +  \int_1^\infty e^{-c_4 u^{\alpha/2+1}} du \right)=c_5 r.
         \end{align*}
         We turn the the second bound. The argument is similar. If $d(x,\partial D)  \ge  r_1$, we have 
         \begin{equation} 
         \label{eq:Maxfar} 
          E_x \left ( \int_0^{\sigma_{r_1} } e^{R_\lambda(t)} dt \right)  \overset{\eqref{eq:mainuppr} }{\le} E_x \left ( \int_0^{\sigma_{r_1}} e^{-\gamma r^\alpha t} dt\right) \le \frac {1}{\gamma r^\alpha}\overset{\eqref{eq:lambdasmll} }{\le}\frac{1}{\lambda}.
         \end{equation} 
Assume that  $d(x,\partial D)\le r_2$.  From Proposition \ref{pr:momgen}-(2)  we have 
$$ E_x \left ( e^{\lambda  \left (  \tau \wedge \sigma_{r_3}\right) } \right)=  \frac{ \sin( \sqrt{2\lambda} x)+\sin( \sqrt{2\lambda} (r_3-x))}{\sin(\sqrt{2\lambda} r_3)}\overset{ \eqref{eq:lambdasmll} }{\le} 2.$$
       Let $K\in\N$. From the strong Markov property we obtain 
    \begin{align*}
     E_x \left ( \int_0^{\tau} e^{R_{\lambda} (t)\wedge K} dt \right) &\le  E_x \left ( \int_0^{\tau\wedge \sigma_{r_3}} e^{\lambda t}  dt \right) + E_x \left ( e^{R_{\lambda} (\sigma_{r_3})}\ch_{\{\sigma_{r_3}<\tau\}}\right )  E_{r_3} \left ( \int_0^{\tau} e^{R_{\lambda} (t)\wedge K} dt \right).\\
     & \overset{\eqref{eq:allbest}}\le \frac {E_x \left ( e^{\lambda\left ( \tau \wedge \sigma_3\right)}\right) -1}{\lambda}+\frac{c_1}{1+c_1}  E_{r_3}  \left ( \int_0^{\tau} e^{R_{\lambda} (t)\wedge K} dt \right)\\
  & \le \frac{1}{\lambda} + \frac{c_1}{1+c_1}  E_{r_3}  \left ( \int_0^{\tau} e^{R_{\lambda} (t)\wedge K} dt \right).
    \end{align*}
%Let $M_2(\gamma):= \max_{d(x,\partial D)\le r_2} E_x \left ( \int_0^\tau e^{R_\lambda(t)} dt\right)$. We have 
But,
   \begin{align*}
E_{r_3}  \left ( \int_0^{\tau} e^{R_{\lambda} (t)\wedge K} dt \right)&\le  E_{r_3}\left (  \int_0^{\sigma_{r_1}} e^{R_{\lambda}(t)} dt \right)+E_{r_3} \left ( e^{R_{\lambda} (\sigma_{r_1}) }\right)  E_{r_1}\left (  \int_0^{\tau} e^{R_{\lambda}(t)\wedge K} dt \right)\\
    & \overset{\eqref{eq:Maxfar},\eqref{eq:mainuppr}}{\le} \frac 1\lambda  +  E_{r_1}\left (  \int_0^{\tau} e^{R_{\lambda}(t)\wedge K} dt \right).
   \end{align*}
 Combining the two upper bounds, we obtain 
   \begin{align} 
\label{eq:long_close}
     E_x \left ( \int_0^{\tau} e^{R_{\lambda} (t)\wedge K} dt \right) &\le \frac {1+2c_1}{1+c_1} \frac 1 \lambda + \frac{c_1}{1+c_1}  E_{r_1}\left (  \int_0^{\tau} e^{R_{\lambda}(t)\wedge K} dt \right)
     \end{align}
 Letting $x=r_1$, we obtain 
    $$ E_{r_1}\left (  \int_0^{\tau} e^{R_{\lambda}(t)\wedge K} dt \right) \le \frac{1+2c_1}{\lambda}, $$
    which in turn implies  
    $$ E_x \left ( \int_0^{\tau} e^{R_{\lambda} (t)\wedge K} dt \right)\le \frac {1}{\lambda} \left (  \frac{1+2c_1}{1+c_1}+ \frac{c_1}{1+c_1} \frac{1+2c_1}{1+c_1}\right) = \frac{1+2c_1}{\lambda}.$$
By letting $K\to\infty$, and using the monotone convergence theorem, we have proved that  
   \begin{equation}
   \label{eq:M2close} 
  E_x \left ( \int_0^\tau e^{R_\lambda(t)} dt\right) \le \frac{1+2c_1}{\lambda},
   \end{equation} 
 whenever  $d(x,\partial D)\le r_2$. \\
 
 Next we obtain an upper bound when $d(x,\partial D)\ge r_2$. We begin with an auxiliary bound.  Let $x \in [r_1,\frac 12]$. Then 
          \begin{align}
          \nonumber
         E_x \left( \int_0^{\tau} e^{R_{\lambda}(t)} dt \right)&= E_x \left ( \int_0^{\sigma_{r_1} } e^{ R_{\lambda} (t)} dt\right)  +E_x \left ( e^{R_{\lambda} (\sigma_{r_1})}\right)  E_{r_1} \left ( \int_0^{\tau} e^{R_{\lambda}(t)} dt\right) .\\
         \label{eq:M2unif}
            & \overset{\eqref{eq:Maxfar},\eqref{eq:mainuppr}}{\le}  \frac{ 1}{\lambda}+ \frac{1+2c_1}{\lambda}\le \frac{2(1+c_1)}{\lambda} .\end{align}
          We now obtain the main bound. Assume that  $x \in [r_2,\frac 12]$. We obtain  
           \begin{align*}
            E_x \left( \int_0^{\tau} e^{R_{\lambda}(t)} dt \right)&= E_x \left ( \int_0^{\sigma_{\kappa^{-1} x } } e^{ R_{\lambda} (t)} dt\right)  +E_x \left ( e^{R_{\lambda} (\sigma_{\kappa^{-1} x})}\right)  E_{\kappa^{-1} x} \left ( \int_0^{\tau} e^{R_{\lambda}(t)} dt\right) .\\
         &  \overset{\eqref{eq:Rlambdaxfar},\eqref{eq:scaling}}{  \le} E_x \left ( \int_0^{\sigma_{\kappa^{-1} x}}  e^{-c_2\gamma x^{\alpha} s} ds \right)  +  2 e^{-c_4 (x/r_2(\gamma)) ^{\alpha/2 + 1} }E_{\kappa^{-1} x} \left( \int_0^{\tau} e^{R_{\lambda}(t)} dt \right) \\
          & \overset{\eqref{eq:M2unif}}{\le} \frac{1}{c_2 \gamma  x^{\alpha}}+  \frac{4(1+c_1)e^{-c_4 (x/r_2(\gamma)) ^{\alpha/2 + 1}}}{\lambda}.         
         \end{align*}
          Therefore 
           \begin{align*} 
  \lambda  \int_{d(x,\partial D)\ge r_2}  E_x \left( \int_0^{\tau} e^{R_{\lambda}(t)} dt \right) 
           dx &\le 2  \theta h  \int_{r_2\le x<\frac 12} \frac{1}{c_2 x^\alpha} dx+ 4 (1+c_1)r_2    \int_1^\infty e^{-c_4 u^{\alpha/2+1}} du.\\
           \overset{\tiny{\mbox{Lemma }}\ref{lem:hasit}-(2)}{\le} c_6 (\theta +1)r.  
           \end{align*}
          Along with  \eqref{eq:M2close} we obtain 
          $$ \lambda  E_{\mu} \left ( \int_0^\tau e^{R_\lambda(t)} dt \right) \le 2 (1+2c_1) r_2 + c_6 (\theta+1) r =(\theta+1) c_7 r.$$
           Let $\theta_0:=  \min  (\frac{\theta_1}{2},1).$  If 
            $\theta > \theta_0$, then  $(1+\theta )< 2 \theta $. When $ \theta \le \theta_0$ we have 
          $$ \lambda  E_{\mu} \left ( \int_0^\tau e^{R_\lambda(t)} dt \right) \le   \frac{\lambda} {\lambda(\theta_0)} \lambda(\theta_0)E_{\mu} \left ( \int_0^\tau e^{R_{\lambda(\theta_0)}(t)} dt \right)\le 
          \theta \frac{(1+\theta_0)}{\theta_0} c_7 r ,$$ 
          and the result follows. 
\end{proof} 
For real  $\theta$, let $v_{\theta}:= 1+  \theta_-$, where $\theta_- := \max(-\theta, 0)$.
\begin{lemma}
\label{lem:lowerbd}

There exist positive constants $C_3,C_4,\gamma_1$ depending only on $V$, such that for $\gamma >\gamma_1$ and $\theta \in \R$
   $$ E_\mu \left (  e^{R_{\lambda} (\tau)}\right)  \ge C_3 \frac{ r(\gamma)}{\sqrt{v_{\theta}}} ,$$
  and 
  $$ |\lambda | E_\mu \left ( \int_0^{\tau} e^{R_{\lambda}(t)} dt \right)\ge  C_4 \frac{ |\theta|}{v_{\theta}}  r(\gamma).$$
\end{lemma}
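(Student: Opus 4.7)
The plan is to bound both quantities from below by restricting the Brownian motion to favorable events on which the exponent $R_\lambda$ can be controlled, then applying either Proposition \ref{pr:momgen} or direct exit-time estimates. Continuity of $V$ on $\ol{D}$ together with $V(x)\underset{x\to 0^+}{\asymp} x^\alpha$ gives $V(x)\le C x^\alpha$ on $(0,1/2]$, and hence $\gamma V(x)\le C r^{-2}$ throughout $(0,r]$ (since $\gamma r^\alpha=r^{-2}$); we work on the left half of $D$, as $\alpha$ is the larger of the two boundary exponents.

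For the first inequality, I would integrate over starting points $y\in(0,r)$ the contribution on the event $\{\tau_0<\tau_r\}$, on which $\tau=\tau_0$ and $X(s)\in[0,r]$ throughout. On this event $\gamma\int_0^\tau V(X(s))\,ds\le Cr^{-2}\tau$, so $R_\lambda(\tau)\ge -\rho\tau$ with $\rho:=\lambda_-+Cr^{-2}$ and $\lambda_-:=\max(-\lambda,0)$. Proposition \ref{pr:momgen}(1) then yields
\[
E_y\!\left[e^{R_\lambda(\tau)}\ch_{\{\tau_0<\tau_r\}}\right]\ge\frac{\sinh(\sqrt{2\rho}(r-y))}{\sinh(\sqrt{2\rho}\,r)}.
\]
Substituting $u=y/r$ and integrating over $y\in(0,r)$ gives $r\,F(s)$ with $s:=\sqrt{2\rho}\,r$ and $F(s)=(\cosh s-1)/(s\sinh s)$. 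A case analysis of $s^2=2C+2\lambda_-r^2$, using the explicit form of $h(\gamma)$ in each range of $\alpha$, shows $s^2\le C' v_\theta$; since $F(s)\ge c/(1+s)$ and $v_\theta\ge 1$, the bound $C_3 r/\sqrt{v_\theta}$ follows.

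For the second inequality, I would integrate over $y\in(c_1 r,1/2)$ using the event $A_y$ that the Brownian motion stays in $[y/2,3y/2]$ for time $t_0:=1/(\gamma y^\alpha)$. Choosing $c_1$ so that $t_0\le y^2/16$ throughout this range, a Brownian scaling argument gives $P_y(A_y)\ge c_0>0$ uniformly. On $A_y$ and for $t\le t_0$ we have $V(X(s))\le Cy^\alpha$, hence $\gamma\int_0^t V(X(s))\,ds\le C$ and $R_\lambda(t)\ge\lambda t-C$; then $\int_0^{t_0}e^{\lambda t}\,dt\ge c/\max(1/t_0,\lambda_-)$ gives
\[
E_y\!\left[\int_0^\tau e^{R_\lambda(t)}\,dt\right]\ge\frac{C'}{\gamma y^\alpha+\lambda_-}.
\]
Integrating against $\mu$ and multiplying by $|\lambda|$ reduces the second bound to
\[
|\lambda|\int_{c_1 r}^{1/2}\frac{dy}{\gamma y^\alpha+\lambda_-}\ge\frac{C''|\theta|}{v_\theta}\,r.
\]
For $\theta\ge 0$ the integrand simplifies to $\lambda/(\gamma y^\alpha)$, and direct computation in each of the three ranges for $\alpha$ yields $\theta r$ (the divergences $r^{1-\alpha}$ for $\alpha>1$ and $\ln\gamma$ for $\alpha=1$ exactly cancel the corresponding factor in $h(\gamma)$). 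For $\theta<0$ the substitution $y=y^*u$ with $y^*:=(\lambda_-/\gamma)^{1/\alpha}$ converts the left side to $y^*\int du/(u^\alpha+1)$, and a split analysis based on the size of $y^*/r$ again produces a lower bound of order $|\theta|/v_\theta\cdot r$.

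The main obstacle, I expect, is confirming that the single target $|\theta|/v_\theta\cdot r$ captures the lower bound uniformly in $\theta\in\R$ across all three regimes of $\alpha$. The $\alpha=1$ case depends on the logarithmic factor in $h(\gamma)$ cancelling the $\ln\gamma$ from $\int_{r}^{1/2} dy/y$; for $|\theta|$ very large, the location $y^*$ of the transition in the integrand must be tracked carefully; and, notably, the starting range $(c_1r,1/2)$ (rather than $(0,r)$) is essential when $\alpha\le 1$, where the bulk contribution dominates the boundary-layer contribution to $E_\mu\!\left[\int_0^\tau e^{R_\lambda(t)}\,dt\right]$.
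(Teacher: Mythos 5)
Your proposal is correct and follows the same basic strategy as the paper: for the first inequality, restrict to starting points in a boundary layer of width $r$ and to the event of exiting on the near side before leaving a comparable interval, control $R_\lambda$ by a linear function of time using $V(x)\le Cx^\alpha$ and $\gamma r^\alpha = r^{-2}$, and apply Proposition \ref{pr:momgen}-(1); for the second, restrict to starting points $y$ in the bulk and lower bound the time integral by waiting a time of order $1/(\gamma y^\alpha)$ inside an interval comparable to $[y/2,3y/2]$, then integrate $dy/y^\alpha$ and invoke Lemma \ref{lem:hasit}-(2). The paper's implementation differs from yours only in technical detail: for the first bound it fixes $x$ and uses the exit interval $[0,2x]$ (so after Proposition \ref{pr:momgen}-(1) one gets $\tfrac{1}{2\cosh(\sqrt{2\rho}x)}$ and the $y$-integral is a scalar explicit integral), whereas you fix the interval $[0,r]$ and obtain the function $F(s)=(\cosh s-1)/(s\sinh s)$; for the second bound the paper applies $E_x\bigl[\int_0^\sigma e^{-\rho t}\,dt\bigr]=(1-E_x[e^{-\rho\sigma}])/\rho$ with $\sigma=\tau_{0.5x}\wedge\tau_{1.5x}$ and Proposition \ref{pr:momgen}-(1), whereas you use a Brownian-scaling probability bound $P_y(A_y)\ge c_0$ together with an explicit $\int_0^{t_0}e^{\lambda t}\,dt$; both give the identical pointwise estimate of order $1/(v_\theta\gamma y^\alpha)$. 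A small simplification over your sketch: in the case $\theta<0$ there is no need for the substitution $y=y^*u$ and a case split on $y^*/r$; since $y\ge c_1 r$ forces $y^\alpha\ge r^\alpha\ge h$ (Lemma \ref{lem:hasit}-(1)), one has $\gamma y^\alpha+\lambda_-\le(1+\theta_-)\gamma y^\alpha=v_\theta\gamma y^\alpha$ directly, reducing the computation to the $\theta\ge 0$ case with an extra factor of $1/v_\theta$, which is essentially what the paper does.
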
 
\begin{proof} 
We begin with some preparation. Let $\eta$ be a positive constant satisfying  $V(x) \le \eta x^{\alpha}$ for all $x\in D$. By definition of $\lambda$ and Lemma \ref{lem:hasit}-(1),  $\lambda= \theta \gamma h \ge  -\theta_-\gamma r^{\alpha}$. Now let $x \in [0,r]$, and let $0\le s\le  t \le \tau_{2x}$. Then $P_x$ a.s. we have $V(X(s)) \le \eta X(s)^{\alpha}\le \eta 2^{\alpha} r^{\alpha}$. Therefore 
%$$ R_\lambda (t) \ge - \theta_- \gamma r^{\alpha}t  - \gamma c_1 r^{\alpha}  t \ge - c_1  v_\theta \gamma r^{\alpha} t.$$
%%% 
%We continue according to the value of $\alpha$, starting from  the case $\alpha>1$. Since $h=r^\alpha$, we have 
%$$
 % R_{\lambda}( \tau) \ge  -\gamma \left (\theta_-  r^\alpha  +\eta (2x)^{\alpha} \right)\tau
   %\ge  -c_1 (1+\theta_-) \gamma r^\alpha \tau $$
   %for all $\gamma>0$. \\ 
  % Suppose now that $\alpha\le 1$. By Lemma \ref{lem:hasit}-(1),   there exists $\gamma_1':=\gamma_1'(\alpha,\theta)$ such that for $\gamma>\gamma_1'$, we have  $\theta_- h < r^\alpha$. Furthermore, for fixed $\alpha$, the function $\theta \to \gamma_1'(\alpha,\theta)$ could be chosen as nondecreasing. We have 
  %  $$ R_{\lambda} (\tau) \ge - \left (r^\alpha + \eta(2x)^\alpha \right)\tau=-c_1 \gamma r^\alpha \tau.$$
   %  Combining both cases,  we obtain 
     \begin{equation*}
     \label{eq:Rlmlrg} 
     R_\lambda (t) \ge - \theta_- \gamma r^{\alpha}t  - \gamma c_1 r^{\alpha}  t \ge - c_1  v_\theta \gamma r^{\alpha} t.
     \end{equation*} 
     Then,  
    $$ E_x \left ( e^{R_{\lambda}(\tau)}\right) \ge E_x \left ( e^{R_{\lambda}(\tau)}\ch_{\{\tau< \tau_{2x}\}}\right) \ge E_x \left ( e^{-c_1v_{\theta}  \gamma r^\alpha \tau}\ch_{\{\tau< \tau_{2x}\}} \right) .$$ 
 %%% TOOK1
% Recall that for $\rho>0$ and $a<y<b$ we have 
 %   $$E_y \left ( e^{-\rho \tau_a }\ch_{\{\tau_a< \tau_b\}} \right) = \frac{ \sinh (\sqrt{2\rho} (b-y) ) }{\sinh(\sqrt{2\rho}(b-a) )}.$$ 
  %%% TOOK2 
Letting $\rho:=  c_1 v_{\theta} \gamma r^\alpha$, $a:=0,b=2x$ and $y:=x$ in Proposition \ref{pr:momgen}-(1) we obtain 
$$E_x \left ( e^{-c_1 \gamma r^\alpha \tau}\ch_{\{\tau< \tau_{2x}\}} \right) = \frac{ \sinh (\sqrt{2\rho} x ) }{\sinh(\sqrt{2\rho}2x )}=\frac{1}{2\cosh(\sqrt{2\rho}x)}\ge \frac{1}{2 e^{\sqrt{2\rho} x}}.$$
% where $\rho := c_1 v_{\theta} \gamma r^\alpha$.
 Since $\sqrt{2\rho}   = \sqrt{2 c_1v_{\theta} }  \gamma^{1/2}  r^{\alpha/2} =\sqrt{2 c_1v_{\theta} }r^{-1} $,   we conclude that 
\begin{align*}
  E_\mu \left ( e^{R_{\lambda} (\tau)}\right)   &\ge \int_{0<x<r} E_x \left ( e^{R_{\lambda}(\tau)}\ch_{\{\tau<\tau_{2x}\}}\right)dx \\
   & \ge \frac 12 \int_{0<x<r} e^{-\sqrt{2 c_1v_{\theta}}r^{-1} x} dx \\
    & = \frac r2  \int_0^1 e^{-\sqrt{2 c_1v_{\theta}}y} dy\\
     &  = \frac  {1-e^{-\sqrt{2c_1v_{\theta}}  }}{2\sqrt{2c_1 v_{\theta}}} r \ge \frac{1-e^{-\sqrt{2c_1}} }{2\sqrt{2c_1 v_{\theta}}}r = \frac{c_2 r }{\sqrt{v_{\theta}}}, \end{align*}
     and $c_2$ is a positive constant independent of $\theta$. This completes the proof of the first bound. \\
     
      We turn to the second  bound. Fix  $x \in [2r,\frac 13]$. We have 
 $$ E_x \left ( \int_0^{\tau} e^{R_\lambda (t)} dt\right)  \ge E_x \left ( \int_0^{\tau_{0.5 x} \wedge \tau_{1.5x} }e^{R_\lambda (t)} dt\right ).$$
  Let $0\le s\le t  \le \tau_{0.5x} \wedge \tau_{1.5 x}$. Then  $ V(X(s)) \le \eta (1.5 x)^{\alpha}$, and since  $\lambda = \theta \gamma h \ge -\gamma \theta_- r^{\alpha} \ge -\gamma \theta_- (0.5x)^\alpha$, we conclude with 
  $$ R_\lambda (t) \ge - \gamma (\theta_-0.5^{\alpha} +\eta 1.5^{\alpha}) x^{\alpha} t =-c_3 v_\theta\gamma  x^{\alpha} t.$$ 
  We then have 
    $$   E_x \left ( \int_0^{\tau_{0.5 x} \wedge \tau_{1.5x} }e^{R_{\lambda} (t)} dt\right )\ge  \frac{1- E_x \left( e^{-c_3 v_{\theta} \gamma x^{\alpha} \tau_{0.5x}\wedge \tau_{1.5x}}\right)}{c_3v_{\theta}\gamma x^{\alpha}} .$$ 
    From Proposition \ref{pr:momgen}-(1) with $\rho := c_3v_{\theta} \gamma x^{\alpha}$,  $y:=x,~a:=0.5x$ and $b:=1.5x$ to obtain 
    %Form \eqref{eq:std_lap} we obtain that for  $\rho,y>0$, 
$$E_x(  e^{-\rho \left( \tau_{0.5x}\wedge \tau_{1.5x}\right)})=\frac{ 1}{\cosh(\sqrt{2\rho}0.5x)}.$$
 Observe that  $\sqrt{2\rho}   0.5 x \ge 0.5  \sqrt{2c_3} \gamma^{1/2} r^{\alpha/2+1}= 0.5 \sqrt{2 c_3}$. Therefore 
$$  E_x \left ( \int_0^{\tau_{0.5 x} \wedge \tau_{1.5x} }e^{R_\lambda(t)} dt\right )\ge \frac{1- \frac{1}{\cosh (\sqrt{2\rho} 0.5x)}}{\rho }\ge \frac{c_4} {v_{\theta} \gamma x^{\alpha} }, $$
where the positive constant $c_4$ is independent of $\theta$. Integrating this inequality we obtain 
  $$ E_{\mu} \left ( \int_0^{\tau} e^{R_\lambda (t)} dt\right) \ge \frac{c_4}{\gamma v_{\theta}} \int_{2r<x<\frac 13}  \frac{1}{x^\alpha} dx,$$
The result now follows from  Lemma \ref{lem:hasit}-(2). 
\end{proof} 
\section{Proof of Theorem \ref{th:asymp}}
In this section we use the results of the preceding sections to prove Theorem \ref{th:asymp}. 
\begin{proof}[Proof of Theorem \ref{th:asymp}]
\label{sec:proof} 
 Let $\lambda=\lambda(\theta,\gamma,\alpha)$  be the function defined in \eqref{eq:lambda}. 
 We first obtain a lower bound on $\lambda_0(\gamma)$. 
 It follows from   Proposition \ref{pr:Rmomgen}-(4) and Lemmas \ref{lem:upperbd} and \ref{lem:lowerbd}, that 
  $$ E_\mu^{\gamma} \left ( e^{\lambda \tau} \right) \le \frac{1}{1 - \frac{C_2 \theta }{C_3} },$$
   for $\theta \in (0,\theta_1)$ and all $\gamma$ sufficiently large. In particular letting $\theta =  \frac 12 \min (\frac{C_3}{C_2},\theta_1)$, we obtain that $E_x^{\gamma} \left (e^{\lambda \tau} \right)$ is finite for some $x \in D$. We conclude from \eqref{eq:momgen_rep} that  $\lambda \le \lambda_0(\gamma)$, completing the proof of the lower bound on $\lambda_0(\gamma)$. \\
   
   We turn to the upper bound. In light of \eqref{eq:momgen_rep}, in order to show that $\lambda \ge \lambda_0(\gamma)$, it is sufficient to show that $E_x^{\gamma} \left (e^{\lambda \tau} \right)=\infty$ for some $x\in D$. However, by Proposition \ref{pr:Rmomgen}-(1) this  condition holds if  $E_{\mu}^\gamma ( e^{\lambda \tau}) =\infty$. This is what we will prove. We split the discussion according to the value of $\alpha$. \\
   
    Assume first that $\alpha\le 1$.  From Lemmas \ref{lem:lowerbd} and \ref{lem:upperbd} we conclude that there exist positive constants depending only on $V$ such that for every $\theta >0$, there exists  $\gamma_1:=\gamma_1(\alpha,\theta)\in (0,\infty)$ and 
  $$ |\lambda| E_\mu  \left ( \int_0^\tau e^{R_\lambda(t)} dt \right) \ge C_4 \theta r,\mbox{ and } E_\mu \left ( e^{R_\lambda(\tau)}\right) \le C_1 r,$$ 
  provided $\gamma > \gamma_1$. Furthermore, $\theta \to \gamma_1(\alpha,\theta)$ is nondecreasing, hence the above inequalities hold  for all  $0<\theta< \frac{2C_1}{C_4}$, if $\gamma \ge  \gamma_1(\alpha, \frac{2C_1}{C_4})$. But then,  Proposition \ref{pr:Rmomgen}-(4)  gives 
  $$ \liminf_{\theta \nearrow \frac{C_1}{C_4} } E_\mu^\gamma \left ( e^{\lambda \tau} \right) \ge \lim_{\theta \nearrow \frac{C_1}{C_4}} \frac{1}{1-\frac{C_4 \theta}{C_1}}=\infty.$$ 
  In particular, for $\theta : = \frac{C_1}{C_4}$, we have $E_{\mu}^\gamma \left ( e^{\lambda \tau } \right) = \infty$. \\

 Finally, assume that  $\alpha>1$. Note that  the upper bounds of Lemma \ref{lem:upperbd} may not hold for all $\theta$, so the argument in the last paragraph may not work. Recalling from \eqref{eq:lambda} that  $\lambda =\theta \gamma^{\frac{2}{\alpha+2}}$, it follows from Proposition \ref{pr:crudebd} that there exists a constant $\theta_0\in (0,\infty)$ such that for $\theta>\theta_0$,  we have  $E_{\mu}^\gamma \left ( e^{\lambda \tau} \right) =\infty$.  
    \end{proof} 
\bibliographystyle{amsalpha}
\bibliography{instj_bdd}
\end{document}